\tikzstyle arrowstyle=[scale=1.5]
\tikzstyle directed=[postaction={decorate,decoration={markings,
    mark=at position .725 with {\arrow[arrowstyle]{stealth}}}}]
\newtheorem{theorem}{Theorem}[section]
\newtheorem{lemma}[theorem]{Lemma}             
\theoremstyle{definition}
\newtheorem{definition}[theorem]{Definition}   
\newtheorem{remark}[theorem]{Remark}           
\theoremstyle{remark}
\newtheorem{example}[theorem]{Example}         
\newtheoremstyle{TheoremNum}
       {\topsep}{\topsep}              
        {\itshape}                      
        {}                              
        {\bfseries}                     
        {.}                             
        { }                             
        {\thmname{#1}\thmnote{ \bfseries #3}}
\theoremstyle{TheoremNum}
\newtheorem{thmn}{Theorem}
\numberwithin{equation}{theorem}
\newcommand\red{\color{red}}
\newcommand\cx[1]{\multicolumn{1}{|c|}{\!#1\!}}
\newcommand\cxr[1]{\multicolumn{1}{|c|}{\red\!#1\!}}
\newcommand\mcx[1]{\multicolumn{1}{c|}{\!#1\!}}
\newcommand\fA{\mathfrak{A}}
\newcommand\ba{\mathbf{a}}
\newcommand\bb{\mathbf{b}}
\newcommand\bc{\mathbf{c}}
\newcommand\bd{\mathbf{d}}
\newcommand\bl{\boldsymbol{\rotatebox{10}{$\ell$}}}
\newcommand\bm{\mathbf{m}}
\newcommand\bL{\mathbf{L}}
\newcommand\bv{\mathbf{v}}
\newcommand\bx{\mathbf{x}}
\newcommand\by{\mathbf{y}}
\newcommand\bw{\mathbf{w}}
\newcommand\br{\mathbf{r}}
\newcommand\Reals{\mathbb{R}}
\newcommand\N{\mathbb{N}}
\newcommand\Z{\mathbb{Z}}
\newcommand{\bnv}{\texttt{burnt\_vertices}}
\newcommand{\damp}{\texttt{dampened\_edges}}
\newcommand{\tree}{\texttt{tree\_edges}}
\newcommand{\pf}{\mathcal{P}}
\newcommand{\cR}{\mathcal{R}}
\newcommand\sbr[1]{\left\langle#1\right\rangle}
\newcommand\nbr[1]{\big|\langle#1\rangle\big|}
\newcommand\bbr[1]{\big|\big\langle#1\big\rangle\big|}
\DeclareMathOperator{\leg}{leg}
\newcommand{\T}{\mathsf{T}}
\newcommand{\PF}{\mathsf{PF}}
\DeclareMathOperator{\run}{run}
\DeclareMathOperator{\Run}{Run}
\DeclareMathOperator{\LT}{\mathcal{LL\/T}}
\DeclareMathOperator{\ZP}{\mathcal{ZP\!F}}
\DeclareMathOperator{\RP}{\mathcal{RP\!F}}
\DeclareMathOperator{\RR}{\mathcal{RR\/W}}
\newcommand{\RW}{\mathsf{RW}}
\DeclareMathOperator{\dfs}{DFS}
\DeclareMathOperator{\rim}{rim}
\DeclareMathOperator{\coim}{coim}
\newcommand\eqcl[1]{\bar{#1}}
\title{The number of parking functions with center of a given length}
\author[R.~Duarte]{Rui Duarte} \address{CIDMA and Department of
  Mathematics, University of Aveiro, 3810-193 Aveiro, Portugal}
\email{rduarte@ua.pt}
\author[A.~Guedes~de~Oliveira]{Ant\'onio Guedes de Oliveira}
\address{CMUP and Department of Mathematics, Faculty of Sciences,
  University of Porto, 4169-007 Porto, Portugal}
\email{agoliv@fc.up.pt}
\thanks{This work was partially supported by CMUP (UID/MAT/00144/2013)
  and CIDMA (UID/MAT/04106/2013), which are funded by FCT (Portugal)
  with national (ME) and European structural funds through the
  programs FEDER, under the partnership agreement PT2020.}
\begin{document}
\begin{abstract}
Let $1\leq r\leq n$ and suppose that, when the \emph{Depth-first
  Search Algorithm} is applied to a given rooted labeled tree on $n+1$
vertices, exactly $r$ vertices are visited before backtracking. Let $R$
be the set of trees with this property.  We count the number of
elements of $R$.

For this purpose, we first consider a bijection, due to Perkinson,
Yang and Yu, that maps $R$ onto the set of parking function with
\emph{center} (defined by the authors in a previous article) of size
$r$.
A second bijection maps this set onto the set of parking functions
with \emph{run} $r$, a property that we introduce here.
We then prove that the number of length $n$ parking functions with a
given run is the number of length $n$ \emph{rook words} (defined by
Leven, Rhoades and Wilson) with the same run. This is done by counting
related lattice paths in a ladder-shaped region.
We finally count the number of length $n$ \emph{rook words} with run
$r$, which is the answer to our initial question.
\end{abstract}


\maketitle

\section{Introduction}

Let $\T_n$ be the set of rooted labeled trees on the set of vertices
$\{0,1,\dotsc,n\}$ with root $r=0$, and let $T\in\T_n$.  Suppose that
the \emph{Depth-first Search Algorithm} ($\dfs$) is applied to $T$ by
starting at $r$ and by visiting at each vertex the unvisited
neighbor of highest label.  If $T$ is not a path with endpoint $r$, at
a certain moment the algorithm will backtrack. In this paper we are
concerned with the number of vertices that are visited before this
happens.

More precisely, let $\bv=\bv(T)=(v_1,\dotsc,v_k)$ be the ordered set
of vertices different from the root that are visited \emph{before
  backtracking}, and let $\leg(T)=k$ be the length of $\bv$.  We
evaluate explicitly the enumerator
$$\LT_n(t)=\sum_{T\in\T_n} t^{\leg(T)}\,.$$

It is well-known that $|\T_n|=(n+1)^{n-1}=|\PF_n|$, where $\PF_n$ is
the set of \emph{parking functions of length $n$}, consisting of the
$n$-tuples $\ba=(a_1,\dotsc,a_n)$ of positive integers such that the
$i$th entry \emph{in ascending order} is always at most $i\in[n]:=\{1,\dotsc,n\}$.
As usual, we may denote $\ba$ either as a word, $\ba=a_1\dotsb a_n$,
or as a function, $\ba\colon[n]\to[n]$ such that $\ba(i)=a_i$.

Parking functions are important combinatorial structures with
several connections to other areas of mathematics (see
e.g. Haglund \cite{Hag} and the recent survey by Yan \cite{Yan}).  In
particular, starting with Kreweras \cite{Kreweras}, various bijections between trees
on $n+1$ vertices and parking functions of length $n$ were defined where the
reversed sum enumerator for parking functions is the counterpart to the inversion
enumerator for trees \cite{Knuth,GOLV,Shin,Yan}. In a recent paper, Perkinson, Yang and Yu
\cite{PYY} constructed a very general algorithm that gives us as a
particular case a new bijection with this property.

We show in Section~\ref{algorithms} (cf. {\cite{DGO2}}) that, for
this bijection, the counterpart of the statistics $\leg(T)$ is
$z(\ba)=|Z(\ba)|$, where $Z(\ba)$ is \emph{the center of $\ba$}
defined in \cite{DGO}.
We recall that, for any $a \in [n]^n$, the center of $\ba$ is the largest subset $X = \{ x_1, \ldots, x_\ell \}$ of $[n]$ such that $1 \leq x_1 < \cdots < x_\ell \leq n$ and
$a_{x_i} \leq i$ for every $i \in [\ell]$.
We  namely prove that if $\bv(T)=(v_1,\dotsc,v_k)$ then exactly $Z(\ba)=\{v_1,\dotsc,v_k\}$.
Hence, we obtain the following result, if we consider the enumerator
$$\ZP_n(t)=\sum_{\ba\in\PF_n} t^{z(\ba)}\,,$$
\begin{thmn}[\textbf{\ref{t2pf}}]
For every $n\in\N$,
$$\LT_n(t)=\ZP_n(t)\,.$$
\end{thmn}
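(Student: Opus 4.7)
The plan is to prove the identity bijectively. Since $|\T_n|=(n+1)^{n-1}=|\PF_n|$ and a bijection $\phi:\T_n\to\PF_n$ (that of Perkinson, Yang and Yu) has already been singled out, it suffices to show that $\phi$ sends the statistic $\leg$ on trees to the statistic $z$ on parking functions. Once $\leg(T)=z(\phi(T))$ is established tree by tree, summing $t^{\leg(T)} = t^{z(\phi(T))}$ over $T\in\T_n$ gives $\LT_n(t)=\ZP_n(t)$ directly.

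To make this work, I would prove the stronger, set-level statement that the introduction foreshadows: writing $\phi(T)=\ba$ and $\bv(T)=(v_1,\dotsc,v_k)$,
\[
Z(\ba)=\{v_1,\dotsc,v_k\}\,.
\]
Taking cardinalities then yields $\leg(T)=k=z(\ba)$, which is exactly the per-tree equality of statistics we need.

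The approach I would take for the set identity is a parallel induction on the DFS step. For the inclusion $\{v_1,\dotsc,v_k\}\subseteq Z(\ba)$, I would track an invariant of the PYY algorithm that links the DFS rule of visiting the highest-labelled unvisited neighbour with the value assigned by $\phi$: after the $i$th DFS step, the partial values $a_{v_1},\dotsc,a_{v_i}$ satisfy $a_{v_j}\leq j$ for all $j\leq i$, so the sequence $v_1<\dotsb<v_k$ (in ascending order) witnesses membership in the center. For the reverse inclusion, I would argue that once DFS first backtracks from $v_k$, any additional vertex $x\notin\{v_1,\dotsc,v_k\}$ attached to the chain would violate the same invariant: either $x$ sits below a vertex not yet entered by DFS, forcing $a_x$ to exceed the available index, or the maximality of $Z(\ba)$ is contradicted.

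The main obstacle is clearly the invariant itself: translating the discrete choice made by DFS (highest-labelled unvisited neighbour) into the arithmetic inequality defining the center. This is the content that the forward reference to Section~\ref{algorithms} is meant to supply, and it is where the bulk of the technical work will live; once that correspondence is in hand, the theorem follows by summation with no further computation.
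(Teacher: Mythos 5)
Your overall strategy is the same as the paper's: pass through the Perkinson--Yang--Yu bijection, reduce the theorem to the set-level identity $Z(\ba)=\{v_1,\dotsc,v_k\}$, and prove that identity by following the burning algorithm step by step. The problem is that your proposal stops exactly where the content begins, and the one invariant you do commit to is not the right one. You claim that after the $i$th step $a_{v_j}\le j$ for all $j\le i$, with $j$ the DFS-visit index, and then conclude that ``the sequence $v_1<\dotsb<v_k$ (in ascending order) witnesses membership in the center.'' But the visited vertices are not in ascending order --- DFS moves to the \emph{highest}-labelled available neighbour, so $v_1$ is typically large --- and the center condition requires $a_{x_m}\le m$ where $x_m$ is the $m$th \emph{smallest} element of $\{v_1,\dotsc,v_k\}$. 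The visit-order bound $a_{v_j}\le j$ does not imply this: if $v_1=5$ and $v_2=3$, the center condition demands $a_3\le 1$, while your invariant only yields $a_3\le 2$. What one actually needs, and what the paper proves, is the exact formula $a_{v_i}=\big|\{j\in[i]\mid v_j\le v_i\}\big|$ (the value $\pf(v_i)=a_{v_i}-1$ is decremented once by each earlier call {\sc dfs\_from}$(v_m)$ with $v_{m+1}<v_i$, and must reach $0$ for $v_i$ to be burnt), combined with the observation that later additions to the visited set can only increase the rank of $v_i$, so the inequality $a_{x_m}\le m$ survives to the end of the run.

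The reverse inclusion is also essentially missing. Your dichotomy ``either $x$ sits below a vertex not yet entered by DFS, forcing $a_x$ to exceed the available index, or the maximality of $Z(\ba)$ is contradicted'' is circular in its second branch, since the maximality of $Z(\ba)$ relative to $\{v_1,\dotsc,v_k\}$ is precisely what must be shown. The paper's argument is concrete: at the moment of first backtracking every unvisited $j$ still has $\pf(j)>0$, and since $\pf(j)=a_j-1$ has by then been decremented once for each visited vertex smaller than $j$, one gets $a_j>1+\big|\{i\mid v_i<j\}\big|$, which is exactly the position $j$ would occupy if adjoined to $\{v_1,\dotsc,v_k\}$; hence no such $j$ can be added. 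Until these two computations are carried out, what you have is a correct plan with an incorrectly stated key lemma, not a proof.
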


The evaluation of this new enumerator was indeed part of our initial
twofold purpose for its role in the theory of parking functions,
described as follows.  Consider the Shi arrangement, formed by all the
hyperplanes defined in $\Reals^n$ by equations of form $x_i-x_j=0$ and
of form $x_i-x_j=1$, where $1\leq i<j\leq n$. Let $R_0$ be the chamber
of the arrangement consisting of the intersection of all the open
slabs defined by the condition $0<x_i-x_j<1$.  Pak and Stanley
\cite{Stan} defined a bijective labeling of the chambers of this
arrangement by parking functions, in which $R_0$ is labeled with the
parking function $(1,\dotsc,1)$ and, along a shortest path from $R_0$
to any other chamber, for any crossed hyperplane of form $x_i-x_j=0$
the $j$th coordinate of the label is increased by one, and for any
crossed plane of form $x_i-x_j=1$ it is the $i$th coordinate that is
increased by one. The bijection is defined from chambers (represented
by permutations of $[n]$ decorated with arcs following certain rules)
to parking functions. See \cite{GMV} for a very general perspective of
this bijection. \emph{Centers} occur in the opposite direction, i.e.,
it is from the \emph{center} of any parking function that we may
recover the chamber labeled by it in the Pak-Stanley labeling
\cite{DGO}.

For example, consider the region $\cR$ of the Shi arrangement in $\Reals^9$
defined by
{\small \begin{align*}
& x_8<x_4<x_3<x_9<x_6<x_7<x_1<x_2<x_5\,,\\
& x_8+1>x_7,\,x_3+1>x_2,\, x_7+1>x_5\,,\\
& x_4+1<x_1,\, x_6+1<x_5\,.
\end{align*}

\noindent
According to this, we represent $\cR$ by\\[-20pt]
\begin{align*}
&\begin{tikzpicture}[scale=.3]
\draw (0,0) node [inner sep=.5mm,minimum size=2mm] (x8){$8$};
\draw (1,0) node [inner sep=.5mm,minimum size=2mm] (x4){$4$};
\draw (2,0) node [inner sep=.5mm,minimum size=2mm] (x3){$3$};
\draw (3,0) node [inner sep=.5mm,minimum size=2mm] (x9){$9$};
\draw (4,0) node [inner sep=.5mm,minimum size=2mm] (x6){$6$};
\draw (5,0) node [inner sep=.5mm,minimum size=2mm] (x7){$7$};
\draw (6,0) node [inner sep=.5mm,minimum size=2mm] (x1){$1$};
\draw (7,0) node [inner sep=.5mm,minimum size=2mm] (x2){$2$};
\draw (8,0) node [inner sep=.5mm,minimum size=2mm] (x5){$5$};
\draw  [thick,looseness=0.75]  (x8) to [out=90,in=90] (x7);
\draw  [thick,looseness=0.75]  (x7) to [out=90,in=90] (x5);
\draw  [thick,looseness=0.75]  (x3) to [out=90,in=90] (x2);
\end{tikzpicture}
\shortintertext{By the Pak-Stanley bijection, $\cR$ is associated with
the parking function}
&\ba=341183414\,.
\end{align*}

To obtain $\cR$ from $\ba$, note that the center of this parking function, $Z=\{3,4,6,7,8,9\}$, is formed by the elements of the first arc,
and their positions in the permutation $\pi=8\,4\,3\,9\,6\,7\,1\,2\,5$ can be determined by knowing that $a_i-1$ is the number of  elements less than $i$ that are on the left side of $i$ on $\pi$. Hence, we know how the above representation of $\cR$ begins.
By knowing this, in general (where a similar situation occurs),
we may replace the parking function by another one of shorter length,
and proceed recursively \cite{DGO}.}

\medskip

We now consider a third statistic.  Let, for
$\ba=(a_1,\dotsc,a_n)\in[n]^n$ such that $1 \in \{ a_1, \ldots, a_n \}$,
\begin{gather*}
\run(\ba)=\max\big\{i\in[n]\mid [i]\subseteq\{a_1,\dotsc,a_n\}\big\},
\shortintertext{and let $\run(\ba)=0$ if $1 \notin \{ a_1, \ldots, a_n
  \}$. We prove the following result in Section~\ref{bijection}. Let}
\RP_n(t)=\sum_{\ba\in\PF_n}t^{\run(\ba)}.
\end{gather*}
\begin{thmn}[\ref{pf2pf}]
For every $n\in\N$,
$$\ZP_n(t)=\RP_n(t)\,.$$
\end{thmn}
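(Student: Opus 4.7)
My plan is to construct an explicit bijection $\Phi \colon \PF_n \to \PF_n$ with $\run(\Phi(\ba)) = z(\ba)$, which will immediately give $\ZP_n(t) = \RP_n(t)$.

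First I would replace the extremal definition of $Z(\ba)$ by an equivalent greedy left-to-right procedure: scan $j = 1, 2, \ldots, n$ and include $j$ in $Z(\ba)$ whenever $a_j \leq k+1$, where $k$ is the number of positions already included. A short exchange argument shows that this greedy set realizes the maximum in the definition of the center, so $j \in Z(\ba)$ iff $a_j \leq \bigl|Z(\ba) \cap [j]\bigr|$. Two consequences are useful: every position with $a_j = 1$ lies in $Z(\ba)$, and, writing $Z(\ba) = \{x_1 < \cdots < x_r\}$, the subsequence $(a_{x_1}, \ldots, a_{x_r})$ is itself a length-$r$ parking function satisfying $a_{x_i} \leq i$ for every $i$, so every one of its positions is again a center position.

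The construction of $\Phi$ I would pursue applies a Lehmer-code-type bijection to the "center piece" $(a_{x_1}, \ldots, a_{x_r})$, turning it into a permutation of $\{1, \ldots, r\}$, while reassigning the "violator piece" at the complementary positions $y_1 < \cdots < y_{n-r}$ (which by the greedy step satisfy $a_{y_j} > \bigl|Z(\ba) \cap [y_j]\bigr| + 1$) into values lying in $\{r+2, \ldots, n\}$. Recombining the two pieces at their original positions produces $\bb = \Phi(\ba)$ whose value multiset contains each of $1, \ldots, r$ exactly once and contains no $r+1$, so $\run(\bb) = r$. The inverse reads $r = \run(\bb)$ off the word, splits positions according to whether $b_j \leq r$, and undoes each piece.

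The main obstacle will be checking that the recombined word $\bb$ is indeed a parking function and that the construction is genuinely reversible. The admissible values of the violator piece in $\ba$ and in $\bb$ both depend on the interleaving of center and non-center positions, so the two pieces cannot be transformed entirely independently; the greedy interleaving has to be preserved. I plan to handle this by tracking the profile $c_i(\ba) = |\{j : a_j \leq i\}|$ through the construction and using the greedy characterization to verify $c_i(\bb) \geq i$ on both sides, together with the matching of $\run(\bb)$ with $z(\ba)$. Should a clean uniform bijection prove too intricate, my fallback is to prove the identity by induction on $n$, showing that $\ZP_n(t)$ and $\RP_n(t)$ satisfy the same recurrence obtained by deleting the last entry of $\ba$ and tracking separately the cases where that entry does or does not lie in the center (resp.\ does or does not complete the run).
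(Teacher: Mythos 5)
Your overall strategy coincides with the paper's: an explicit bijection $\Phi\colon\PF_n\to\PF_n$ obtained by Lehmer-decoding the subword indexed by $Z(\ba)$ into a permutation of $[r]$ and relabelling the values at the remaining positions, and your greedy characterization of the center is correct. The gap is in the relabelling of the violator piece. You propose to send those values into $\{r+2,\dotsc,n\}$, so that the image contains each of $1,\dotsc,r$ exactly once. This cannot work. A non-center position $y$ lying strictly between the $(\ell-1)$st and $\ell$th elements of $Z(\ba)$ carries a value in $\{\ell+1,\dotsc,n\}$, a set of size $n-\ell$, which is strictly larger than $\left|\{r+2,\dotsc,n\}\right|=n-r-1$ unless $y$ comes after all of $Z(\ba)$; so no injective reassignment of the required form exists (for $\ba=211\in\PF_3$ one has $Z(\ba)=\{2,3\}$, $r=2$, and the proposed target set is empty). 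Symmetrically, your image misses legitimate run-$r$ parking functions: a word with $\run=r$ need not contain each of $1,\dotsc,r$ exactly once, and its non-$\Run$ positions may carry values $\leq r$ provided the value recurs at a later $\Run$ position (e.g.\ $221\in\PF_3$ has $\run=2$ and $a_1=2$). So the map as described is neither well defined nor surjective.

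The correct relabelling --- which is what the paper does --- depends on the position only through $\ell=\bigl|Z(\ba)\cap[y]\bigr|+1$: writing $b_1\dotsb b_k$ for the Lehmer-decoded permutation placed at the center positions, every non-center value is transformed by the single value-permutation $\sigma_\ba$ with $\sigma_\ba(1)=k+1$, $\sigma_\ba(j)=b_{j-1}$ for $2\leq j\leq k+1$, and $\sigma_\ba(j)=j$ for $j\geq k+2$. This carries the admissible set $\{\ell+1,\dotsc,n\}$ at such a position bijectively onto $[n]\setminus\{k+1,b_1,\dotsc,b_{\ell-1}\}$, which is exactly the constraint characterizing ${\Run_n}^{-1}\bigl(Z(\ba)\bigr)$: the value $k+1$ must never occur, and a value $j\leq k$ may occur at a non-center position only if it reappears at a later center position. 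With this correction the two fibres ${Z_n}^{-1}(A)$ and ${\Run_n}^{-1}(A)$ acquire the same product structure, the inverse map is built from $\sigma_\ba^{-1}$ and the Lehmer encoding, and preservation of the parking property follows as in the paper. Your fallback via a recurrence is not developed enough to assess.
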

\medskip

Now, consider the set $\RW_n$ of \emph{rook words of length $n$} defined
by Leven, Rhoades and Wilson \cite{LRW}, that is, the ordered sets
$\ba=(a_1,\dotsc,a_n)\in[n]^n$ such that $a_1\leq\run(\ba)$.  Let
$$\RR_n(t)=\sum_{\ba\in\RW_n} t^{\run(\ba)}$$
The key to our enumeration is developed in Section~\ref{sequences},
where we prove the following result.
\begin{thmn}[\ref{pf2rw}]
For every $n\in\N$,
$$\RP_n(t)=\RR_n(t)\,.$$
\end{thmn}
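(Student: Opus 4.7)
I aim to prove $\RP_n(t) = \RR_n(t)$ coefficient by coefficient: for each $r$, the number of parking functions of length $n$ with $\run = r$ equals the number of rook words of length $n$ with $\run = r$. The boundary cases are immediate ($r = 0$ gives zero on both sides since every parking function contains a $1$ and every rook word requires $a_1 \leq \run \geq 1$; $r = n$ gives $n!$ on both, counting permutations of $[n]$). Fix $1 \leq r < n$ and set $B_r = [r] \cup \{r+2, \dots, n\}$; every word with run exactly $r$ lies in $B_r^n$ and covers $[r]$.

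I would encode such a word $\ba$ by a binary pattern $\epsilon \in \{0,1\}^n$ (with $\epsilon_i = 0$ iff $a_i \in [r]$), the low subword $\bl \in [r]^\ell$ (where $\ell = \#\{i : \epsilon_i = 0\}$) of the entries in $[r]$ in order, and the high subword $\bh \in \{r+2,\dots,n\}^{n-\ell}$ of the remaining entries in order. The run condition becomes exactly that $\bl$ is surjective onto $[r]$; under this surjectivity the parking inequalities $a_{(i)} \leq i$ for $i \leq \ell$ are automatic (the sorted low prefix starts $1, 2, \dots, r$ with possible repetitions). Thus the parking condition on $\ba$ reduces to the staircase $c_{(j)} \leq \ell + j$ on the sorted high subword $c_{(1)} \leq \cdots \leq c_{(n-\ell)}$, while the rook word condition $a_1 \leq r$ reduces to $\epsilon_1 = 0$. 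The two enumerations then read
\begin{align*}
\sum_\ell \mathrm{Surj}(\ell, r)\binom{n}{\ell} N(\ell) \quad\text{vs.}\quad \sum_\ell \mathrm{Surj}(\ell, r)\binom{n-1}{\ell-1}(n-r-1)^{n-\ell},
\end{align*}
where $N(\ell)$ counts high subwords obeying the staircase. A small example ($n = 4$, $r = 1$) shows these two sums agree but do not split term by term in $\ell$, so a global argument is required.

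The plan is to encode each pair $(\epsilon, \bh)$ as a monotone lattice path in the rectangle $[0, n] \times [0, n-r-1]$---east steps at low indices, vertical jumps of the height recorded by the next entry of $\bh$ at high indices---so that the staircase inequalities cut out a ladder-shaped sub-region. Parking-type witnesses are then paths confined to the ladder, and rook-word-type witnesses are paths whose first step is east. The equality of the two counts should follow from a cycle-lemma-style bijection that rotates positions within the ladder, matching each orbit's ladder-confined paths against its east-first paths; multiplying by $\mathrm{Surj}(\ell, r)$ and summing over $\ell$ then yields the identity. The principal obstacle, which I expect to absorb the technical content of Section~\ref{sequences}, is making this rotational bijection precise in the ladder region---especially handling rotations that alter the effective value of $\ell$ and choosing canonical representatives in the presence of ties in $\bh$.
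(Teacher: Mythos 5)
Your reduction is sound as far as it goes: the decomposition of a run-$r$ word into the pattern $\epsilon$, the surjective low subword, and the high subword is correct, the parking condition does collapse to the staircase on the sorted high subword, and the two sums you display are the right ones (your observation that they do not match term by term in $\ell$ is also correct). But the proof stops exactly where the theorem lives. The entire content of the statement is the identity between those two sums, and what you offer for it is a hope for ``a cycle-lemma-style bijection that rotates positions within the ladder,'' which you yourself flag as the principal obstacle. That is not a proof, and moreover the specific mechanism you propose looks unworkable as stated: the parking condition depends only on the multiset of values, so it is invariant under \emph{every} permutation of positions --- all $n$ cyclic position-shifts of a parking function are again parking functions --- whereas the rook condition singles out position $1$ and holds for exactly $\ell$ of the $n$ shifts. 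A rotation acting on positions therefore cannot redistribute ``parking'' witnesses and ``rook'' witnesses within a common orbit; the two properties are not balanced along position-orbits.

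The paper rotates on the value side instead. It fixes the ordered coimage $\coim(\ba)$ up to cyclic permutation of its blocks (the ``type'' $\fA$), encodes the parking, run-$r$ parking, and rook conditions as membership of $\rim(\ba)$ in sets $\sbr{\ell_1,\dotsc,\ell_k}$ of the same ladder-path flavour you describe, and then proves (Theorem~\ref{th:main}, via the bijective recursions of Lemma~\ref{lem:main}) that the cyclic sum $s(\bl)$ of these ladder counts is \emph{independent of the composition} $\bl$. Comparing the composition of block sizes with $(n-k+1,1,\dotsc,1)$ then equates, type by type, the number of run-$r$ parking functions with the number of run-$r$ rook words, which is strictly finer than the coefficient identity you are after and sums to it. If you want to salvage your outline, the missing ingredient is precisely such a composition-invariance statement for cyclic sums of ladder counts (or an equivalent refinement of the Leven--Rhoades--Wilson Cyclic Lemma that tracks the run statistic); without it, or with only a position-rotation in hand, the argument does not close.
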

In this case we do not consider all parking functions at
once. Instead, we only consider those for which the sets of elements
with the same image are the same. We count parking functions by
counting nonnegative sequences that are componentwise bounded above by
a given positive sequence. Based on  results of
  independent interest we prove that their number is the number of
  rook words defined in the same way.
\medskip

Finally, by directly counting rook words with a given $\run$, we are
able to present in Section~\ref{final} an expression for all the
(equal) previous enumerators.
\begin{thmn}[\ref{Cfinal}]
For every integers $1\leq r\leq n$,
\begin{align*}
[t^r]\big(\LT_n(t)\big)&= r! \sum_{i_1 + \dotsb + i_r = n-r}
(n-1)^{i_1} (n-2)^{i_2} \dotsb (n-r)^{i_r}\\ &= r\ \sum_{j=0}^{r-1}
(-1)^j \binom{r-1}{j} (n-1-j)^{n-1}\,.
\end{align*}
\end{thmn}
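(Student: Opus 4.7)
The plan is to exploit the chain
$$\LT_n(t) = \ZP_n(t) = \RP_n(t) = \RR_n(t)$$
established in Theorems~\ref{t2pf},~\ref{pf2pf} and~\ref{pf2rw} to reduce $[t^r]\LT_n(t)$ to counting the rook words $\ba = (a_1, \dotsc, a_n) \in [n]^n$ with $\run(\ba) = r$, namely those with $a_1 \in [r]$, $[r] \subseteq \{a_1, \dotsc, a_n\}$, and (when $r<n$) $r+1 \notin \{a_1, \dotsc, a_n\}$.

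For the combinatorial first formula I would stratify these rook words by the positions of first occurrences of $1, \dotsc, r$. Let $1 = p_1 < p_2 < \dotsb < p_r$ be those positions, and let $\sigma \in S_r$ be the bijection $a_{p_k} = \sigma(k)$; this accounts for the factor $r!$. Index the $r$ resulting gaps from the right, i.e.\ let $i_k$ count the positions strictly between $p_{r-k+1}$ and $p_{r-k+2}$ (with $p_{r+1} := n+1$), so that $(i_1, \dotsc, i_r)$ ranges over the nonnegative compositions of $n-r$. For $r < n$, a position in the $k$-th gap may independently take any label in $\{\sigma(1), \dotsc, \sigma(r-k+1)\} \cup \{r+2, \dotsc, n\}$, giving $(r-k+1) + (n-r-1) = n-k$ admissible choices. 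Multiplying the per-gap factors yields $(n-1)^{i_1}(n-2)^{i_2} \dotsb (n-r)^{i_r}$; summing over compositions and multiplying by $r!$ produces the first expression. For $r = n$ the sum collapses to the empty composition and the empty product recovers $n!$.

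To pass to the closed-form second expression I would recognise
$$\sum_{i_1 + \dotsb + i_r = n-r}(n-1)^{i_1} \dotsb (n-r)^{i_r} = h_{n-r}(n-1, n-2, \dotsc, n-r),$$
the complete homogeneous symmetric polynomial, and evaluate it by expanding $\prod_{k=1}^{r}\bigl(1-(n-k)t\bigr)^{-1}$ in partial fractions. Extracting the coefficient of $t^{n-r}$ from the simple-pole decomposition gives
$$h_{n-r}(n-1, \dotsc, n-r) = \sum_{k=1}^r \frac{(-1)^{k-1}(n-k)^{n-1}}{(k-1)!\,(r-k)!},$$
so multiplying by $r!$ and substituting $j = k-1$ delivers $r \sum_{j=0}^{r-1}(-1)^j\binom{r-1}{j}(n-1-j)^{n-1}$, as claimed.

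The main subtlety will be the block-independence bookkeeping: one has to verify that the $n-k$ labels available at a position in the $k$-th gap are correct regardless of how the other positions are filled, and that the boundary case $r = n$ (where ``$r+1 \notin \cdots$'' is vacuous and the effective alphabet formally differs) is handled uniformly. Both sides absorb $r = n$ cleanly once the empty-composition/empty-product convention is adopted on the combinatorial side and $h_0 = 1$ is used on the analytic side, so beyond this bookkeeping, both equalities follow directly.
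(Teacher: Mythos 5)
Your reduction to $|\RW_n^r|$ via $\LT_n=\ZP_n=\RP_n=\RR_n$ and your proof of the first identity are essentially the paper's: the paper likewise factors out an $\mathfrak{S}_r$-action on the values in $[r]$ (your $\sigma$), fixes the increasing sequence of first-occurrence positions starting at $1$, and observes that a position lying in the gap after the $j$th first occurrence admits exactly the $j$ values already seen together with $\{r+2,\dotsc,n\}$, hence $n-r+j-1$ labels, which assembles into $r!\sum (n-1)^{i_1}\dotsb(n-r)^{i_r}$; your check that the gap choices are independent of one another (they never disturb the first-occurrence data) is exactly the point the paper relies on implicitly. Where you genuinely diverge is the second identity. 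You obtain it algebraically from the first by recognizing the inner sum as $h_{n-r}(n-1,\dotsc,n-r)$ and extracting $[t^{n-r}]$ from the partial-fraction decomposition of $\prod_{k=1}^r\bigl(1-(n-k)t\bigr)^{-1}$; the residues $(-1)^{k-1}(n-k)^{r-1}/\bigl((k-1)!\,(r-k)!\bigr)$ are correct, and the degenerate case $r=n$ (where one factor is constant) is easily checked directly, as you note. The paper instead proves the second identity combinatorially: by inclusion--exclusion the right-hand side equals $r$ times the number of maps $f\colon[n-1]\to[n-1]$ with $[r-1]\subseteq f([n-1])$, and an explicit bijection from $\RW_n^r$ to $[r]\times B$ is constructed from the maps $\varphi_\ell$. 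Your route is shorter and purely computational, needing no new combinatorial idea once the first formula is in hand; the paper's route explains the alternating sum bijectively. Both arguments are sound.
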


It is perhaps worth noting that rook words were introduced in order to
label the chambers of the \emph{Ish arrangement}, defined in
$\Reals^n$ by all the hyperplanes with equations of form $x_i-x_j=0$,
as before, and of form $x_1-x_j=i$, where again $1\leq i<j\leq n$.
Several bijections, which preserve different properties, have been
defined between the chambers of the Shi arrangement and the chambers
of the Ish arrangement, particularly by Leven, Rhoades and Wilson
using rook words \cite{LRW}.  In fact, our work here may be presented
as another example of a general statement by Armstrong and Rhoades
\cite{AR}, saying that ``The Ish arrangement is something of a `toy
model' for the Shi arrangement'', in the sense that several properties
are shared by both arrangements, but are easier to prove in case of
the Ish arrangement than in the case of the Shi arrangement.

\begin{example}
We consider in Table~\ref{tbl} the case where $n=3$ and hence
$$\LT_3(t)=\ZP_3(t)=\RP_3(t)=\RR_3(t)=4t+6t^2+6t^3$$ by classifying
the corresponding trees and parking functions  according to
the various statistics and the corresponding bijections.
Note that for $n=3$ rook words are parking functions and vice-versa, except that
$\text{3}\text{1}\text{1}\in\PF_3\setminus\RW_3$
whereas $\text{1}\text{3}\text{3}\in\RW_3\setminus\PF_3$. But
$\run(311)=\run(133)=1$.
\end{example}

\begin{table}[ht]\label{tbl}
$$\begin{array}{|c|c|c|c|}
\hline
k&1&2&3\\
\hline
\text{\begin{tabular}{c}Trees $T$\\with $\leg(T)=k$\end{tabular}}&
\begin{tikzpicture}[scale=0.55]
\coordinate [label=right:{\tiny$0$}] (n0) at (0., 0.);
\draw [fill] (n0) circle [radius=.1];
\coordinate [label=above:{\tiny$1$}] (n1) at (0., 1.);
\draw [fill] (n1) circle [radius=.1];
\coordinate [label=below:{\tiny$2$}] (n2) at (0.86, -0.5);
\draw [fill] (n2) circle [radius=.1];
\coordinate [label=below:{\tiny$3$}] (n3) at (-0.86, -0.5);
\draw [fill] (n3) circle [radius=.1];
\draw[ultra thick] (n0) -- (n2) ;
\draw[thick] (n0) -- (n1) ;
\draw[thick] (n1) -- (n3) ;
\end{tikzpicture}\,
\begin{tikzpicture}[scale=0.55]
\coordinate [label=right:{\tiny$0$}] (n0) at (0., 0.);
\draw [fill] (n0) circle [radius=.1];
\coordinate [label=above:{\tiny$1$}] (n1) at (0., 1.);
\draw [fill] (n1) circle [radius=.1];
\coordinate [label=below:{\tiny$2$}] (n2) at (0.86, -0.5);
\draw [fill] (n2) circle [radius=.1];
\coordinate [label=below:{\tiny$3$}] (n3) at (-0.86, -0.5);
\draw [fill] (n3) circle [radius=.1];
\draw[ultra thick] (n0) -- (n3) ;
\draw[thick] (n0) -- (n2) ;
\draw[thick] (n1) -- (n2) ;
\end{tikzpicture}&
\begin{tikzpicture}[scale=0.55]
\coordinate [label=right:{\tiny$0$}] (n0) at (0., 0.);
\draw [fill] (n0) circle [radius=.1];
\coordinate [label=above:{\tiny$1$}] (n1) at (0., 1.);
\draw [fill] (n1) circle [radius=.1];
\coordinate [label=below:{\tiny$2$}] (n2) at (0.86, -0.5);
\draw [fill] (n2) circle [radius=.1];
\coordinate [label=below:{\tiny$3$}] (n3) at (-0.86, -0.5);
\draw [fill] (n3) circle [radius=.1];
\draw[ultra thick] (n0) -- (n3) ;
\draw[ultra thick] (n1) -- (n3) ;
\draw[thick] (n0) -- (n2) ;
\end{tikzpicture}\,
\begin{tikzpicture}[scale=0.55]
\coordinate [label=right:{\tiny$0$}] (n0) at (0., 0.);
\draw [fill] (n0) circle [radius=.1];
\coordinate [label=above:{\tiny$1$}] (n1) at (0., 1.);
\draw [fill] (n1) circle [radius=.1];
\coordinate [label=below:{\tiny$2$}] (n2) at (0.86, -0.5);
\draw [fill] (n2) circle [radius=.1];
\coordinate [label=below:{\tiny$3$}] (n3) at (-0.86, -0.5);
\draw [fill] (n3) circle [radius=.1];
\draw[ultra thick] (n0) -- (n1) ;
\draw[ultra thick] (n1) -- (n3) ;
\draw[thick] (n1) -- (n2) ;
\end{tikzpicture}\,
\begin{tikzpicture}[scale=0.55]
\coordinate [label=right:{\tiny$0$}] (n0) at (0., 0.);
\draw [fill] (n0) circle [radius=.1];
\coordinate [label=above:{\tiny$1$}] (n1) at (0., 1.);
\draw [fill] (n1) circle [radius=.1];
\coordinate [label=below:{\tiny$2$}] (n2) at (0.86, -0.5);
\draw [fill] (n2) circle [radius=.1];
\coordinate [label=below:{\tiny$3$}] (n3) at (-0.86, -0.5);
\draw [fill] (n3) circle [radius=.1];
\draw[ultra thick] (n0) -- (n3) ;
\draw[ultra thick] (n3) -- (n2) ;
\draw[thick] (n1) -- (n3) ;
\end{tikzpicture}&
\begin{tikzpicture}[scale=0.55]
\coordinate [label=right:{\tiny$0$}] (n0) at (0., 0.);
\draw [fill] (n0) circle [radius=.1];
\coordinate [label=above:{\tiny$1$}] (n1) at (0., 1.);
\draw [fill] (n1) circle [radius=.1];
\coordinate [label=below:{\tiny$2$}] (n2) at (0.86, -0.5);
\draw [fill] (n2) circle [radius=.1];
\coordinate [label=below:{\tiny$3$}] (n3) at (-0.86, -0.5);
\draw [fill] (n3) circle [radius=.1];
\draw[ultra thick] (n0) -- (n3) ;
\draw[ultra thick] (n3) -- (n2) ;
\draw[ultra thick] (n2) -- (n1) ;
\end{tikzpicture}\,
\begin{tikzpicture}[scale=0.55]
\coordinate [label=right:{\tiny$0$}] (n0) at (0., 0.);
\draw [fill] (n0) circle [radius=.1];
\coordinate [label=above:{\tiny$1$}] (n1) at (0., 1.);
\draw [fill] (n1) circle [radius=.1];
\coordinate [label=below:{\tiny$2$}] (n2) at (0.86, -0.5);
\draw [fill] (n2) circle [radius=.1];
\coordinate [label=below:{\tiny$3$}] (n3) at (-0.86, -0.5);
\draw [fill] (n3) circle [radius=.1];
\draw[ultra thick] (n1) -- (n3) ;
\draw[ultra thick] (n3) -- (n2) ;
\draw[ultra thick] (n0) -- (n2) ;
\end{tikzpicture}\,
\begin{tikzpicture}[scale=0.55]
\coordinate [label=right:{\tiny$0$}] (n0) at (0., 0.);
\draw [fill] (n0) circle [radius=.1];
\coordinate [label=above:{\tiny$1$}] (n1) at (0., 1.);
\draw [fill] (n1) circle [radius=.1];
\coordinate [label=below:{\tiny$2$}] (n2) at (0.86, -0.5);
\draw [fill] (n2) circle [radius=.1];
\coordinate [label=below:{\tiny$3$}] (n3) at (-0.86, -0.5);
\draw [fill] (n3) circle [radius=.1];
\draw[ultra thick] (n1) -- (n3) ;
\draw[ultra thick] (n1) -- (n2) ;
\draw[ultra thick] (n0) -- (n2) ;
\end{tikzpicture}\\[-10pt]
&\begin{tikzpicture}[scale=0.55]
\coordinate [label=right:{\tiny$0$}] (n0) at (0., 0.);
\draw [fill] (n0) circle [radius=.1];
\coordinate [label=above:{\tiny$1$}] (n1) at (0., 1.);
\draw [fill] (n1) circle [radius=.1];
\coordinate [label=below:{\tiny$2$}] (n2) at (0.86, -0.5);
\draw [fill] (n2) circle [radius=.1];
\coordinate [label=below:{\tiny$3$}] (n3) at (-0.86, -0.5);
\draw [fill] (n3) circle [radius=.1];
\draw[ultra thick] (n0) -- (n3) ;
\draw[thick] (n0) -- (n1) ;
\draw[thick] (n1) -- (n2) ;
\end{tikzpicture}\,
\begin{tikzpicture}[scale=0.55]
\coordinate [label=right:{\tiny$0$}] (n0) at (0., 0.);
\draw [fill] (n0) circle [radius=.1];
\coordinate [label=above:{\tiny$1$}] (n1) at (0., 1.);
\draw [fill] (n1) circle [radius=.1];
\coordinate [label=below:{\tiny$2$}] (n2) at (0.86, -0.5);
\draw [fill] (n2) circle [radius=.1];
\coordinate [label=below:{\tiny$3$}] (n3) at (-0.86, -0.5);
\draw [fill] (n3) circle [radius=.1];
\draw[ultra thick] (n0) -- (n3) ;
\draw[thick] (n0) -- (n1) ;
\draw[thick] (n0) -- (n2) ;
\end{tikzpicture}&
\begin{tikzpicture}[scale=0.55]
\coordinate [label=right:{\tiny$0$}] (n0) at (0., 0.);
\draw [fill] (n0) circle [radius=.1];
\coordinate [label=above:{\tiny$1$}] (n1) at (0., 1.);
\draw [fill] (n1) circle [radius=.1];
\coordinate [label=below:{\tiny$2$}] (n2) at (0.86, -0.5);
\draw [fill] (n2) circle [radius=.1];
\coordinate [label=below:{\tiny$3$}] (n3) at (-0.86, -0.5);
\draw [fill] (n3) circle [radius=.1];
\draw[ultra thick] (n0) -- (n2) ;
\draw[ultra thick] (n2) -- (n3) ;
\draw[thick] (n1) -- (n2) ;
\end{tikzpicture}\,
\begin{tikzpicture}[scale=0.55]
\coordinate [label=right:{\tiny$0$}] (n0) at (0., 0.);
\draw [fill] (n0) circle [radius=.1];
\coordinate [label=above:{\tiny$1$}] (n1) at (0., 1.);
\draw [fill] (n1) circle [radius=.1];
\coordinate [label=below:{\tiny$2$}] (n2) at (0.86, -0.5);
\draw [fill] (n2) circle [radius=.1];
\coordinate [label=below:{\tiny$3$}] (n3) at (-0.86, -0.5);
\draw [fill] (n3) circle [radius=.1];
\draw[ultra thick] (n0) -- (n3) ;
\draw[ultra thick] (n3) -- (n2) ;
\draw[thick] (n0) -- (n1) ;
\end{tikzpicture}\,
\begin{tikzpicture}[scale=0.55]
\coordinate [label=right:{\tiny$0$}] (n0) at (0., 0.);
\draw [fill] (n0) circle [radius=.1];
\coordinate [label=above:{\tiny$1$}] (n1) at (0., 1.);
\draw [fill] (n1) circle [radius=.1];
\coordinate [label=below:{\tiny$2$}] (n2) at (0.86, -0.5);
\draw [fill] (n2) circle [radius=.1];
\coordinate [label=below:{\tiny$3$}] (n3) at (-0.86, -0.5);
\draw [fill] (n3) circle [radius=.1];
\draw[ultra thick] (n0) -- (n2) ;
\draw[ultra thick] (n3) -- (n2) ;
\draw[thick] (n0) -- (n1) ;
\end{tikzpicture}&
\begin{tikzpicture}[scale=0.55]
\coordinate [label=right:{\tiny$0$}] (n0) at (0., 0.);
\draw [fill] (n0) circle [radius=.1];
\coordinate [label=above:{\tiny$1$}] (n1) at (0., 1.);
\draw [fill] (n1) circle [radius=.1];
\coordinate [label=below:{\tiny$2$}] (n2) at (0.86, -0.5);
\draw [fill] (n2) circle [radius=.1];
\coordinate [label=below:{\tiny$3$}] (n3) at (-0.86, -0.5);
\draw [fill] (n3) circle [radius=.1];
\draw[ultra thick] (n0) -- (n3) ;
\draw[ultra thick] (n3) -- (n1) ;
\draw[ultra thick] (n1) -- (n2) ;
\end{tikzpicture}\,
\begin{tikzpicture}[scale=0.55]
\coordinate [label=right:{\tiny$0$}] (n0) at (0., 0.);
\draw [fill] (n0) circle [radius=.1];
\coordinate [label=above:{\tiny$1$}] (n1) at (0., 1.);
\draw [fill] (n1) circle [radius=.1];
\coordinate [label=below:{\tiny$2$}] (n2) at (0.86, -0.5);
\draw [fill] (n2) circle [radius=.1];
\coordinate [label=below:{\tiny$3$}] (n3) at (-0.86, -0.5);
\draw [fill] (n3) circle [radius=.1];
\draw[ultra thick] (n0) -- (n1) ;
\draw[ultra thick] (n1) -- (n3) ;
\draw[ultra thick] (n3) -- (n2) ;
\end{tikzpicture}\,
\begin{tikzpicture}[scale=0.55]
\coordinate [label=right:{\tiny$0$}] (n0) at (0., 0.);
\draw [fill] (n0) circle [radius=.1];
\coordinate [label=above:{\tiny$1$}] (n1) at (0., 1.);
\draw [fill] (n1) circle [radius=.1];
\coordinate [label=below:{\tiny$2$}] (n2) at (0.86, -0.5);
\draw [fill] (n2) circle [radius=.1];
\coordinate [label=below:{\tiny$3$}] (n3) at (-0.86, -0.5);
\draw [fill] (n3) circle [radius=.1];
\draw[ultra thick] (n0) -- (n1) ;
\draw[ultra thick] (n1) -- (n2) ;
\draw[ultra thick] (n2) -- (n3) ;
\end{tikzpicture}\\
\hline
\text{Parking functions $\ba$}&
\text{2}\textbf{\underline1}\text{3}\ \quad
\text{2}\text{2}\textbf{\underline1}&
\textbf{\underline1}\text{3}\textbf{\underline1}\ \quad
\textbf{\underline1}\text{3}\textbf{\underline2}\ \quad
\text{2}\textbf{\underline1}\textbf{\underline1}&
\textbf{\underline1}\textbf{\underline1}\textbf{\underline1}\ \quad
\textbf{\underline1}\textbf{\underline1}\textbf{\underline2}\ \quad
\textbf{\underline1}\textbf{\underline1}\textbf{\underline3}\\
\text{with $z(\ba)=k$}&\text{2}\text{3}\textbf{\underline1}\ \quad
\text{3}\text{2}\textbf{\underline1}&
\text{2}\textbf{\underline1}\textbf{\underline2}\ \quad
\text{3}\textbf{\underline1}\textbf{\underline1}\ \quad
\text{3}\textbf{\underline1}\textbf{\underline2}&
\textbf{\underline1}\textbf{\underline2}\textbf{\underline1}\ \quad
\textbf{\underline1}\textbf{\underline2}\textbf{\underline2}\ \quad
\textbf{\underline1}\textbf{\underline2}\textbf{\underline3}\\
\hline
\text{Parking functions $\ba$}&
\text{1}\textbf{\underline1}\text{3}\ \quad
\text{1}\text{1}\textbf{\underline1}&
\textbf{\underline2}\text{1}\textbf{\underline1}\ \quad
\textbf{\underline1}\text{2}\textbf{\underline2}\ \quad
\text{2}\textbf{\underline2}\textbf{\underline1}&
\textbf{\underline3}\textbf{\underline2}\textbf{\underline1}\ \quad
\textbf{\underline2}\textbf{\underline3}\textbf{\underline1}\ \quad
\textbf{\underline2}\textbf{\underline1}\textbf{\underline3}\\
\text{with $\run(\ba)=k$}&
\text{1}\text{3}\textbf{\underline1}\ \quad
\text{3}\text{1}\textbf{\underline1}&
\text{1}\textbf{\underline1}\textbf{\underline2}\ \quad
\text{1}\textbf{\underline2}\textbf{\underline1}\ \quad
\text{2}\textbf{\underline1}\textbf{\underline2}&
\textbf{\underline3}\textbf{\underline1}\textbf{\underline2}\ \quad
\textbf{\underline1}\textbf{\underline3}\textbf{\underline2}\ \quad
\textbf{\underline1}\textbf{\underline2}\textbf{\underline3}\\
\hline
\end{array}$$
\caption{The case where $n=3$}
\end{table}

\section{From labeled trees to parking functions: legs \emph{vs.} centers}\label{algorithms}

We reproduce here the algorithm (Algorithm~\ref{dfs-burning}, below)
of Perkinson, Yang and Yu \cite{PYY}, that takes as input the parking function
$\ba$ (or, more precisely, takes as input $\pf = \ba-1 \colon [n] \to \N\cup\{0\}$ such that $\pf(i)=a_i-1$) and returns
the list $\tree$ of edges of a tree. Note that a parking function is a
$G$-parking function where $G$ is a (rooted) complete graph on
$V$. For this reason, the sentence ``for each $j$ adjacent to $i$ in
$G$'' reads as ``for each $j\neq i$''.

Note also that, in general, a spanning tree $T$ of $G$ is seen as a
directed graph in which all paths lead away from the root. So, edge
$ij$ is written $(i,j)$ if $i$ is in the (unique) path from $i$ to $j$
(with no vertex between them).  Note also that, by definition
(cf. Line~7), if after running the algorithm both edges $(i,j)$ and
$(i,k)$ belong to $T$ and $j>k$ then {\sc dfs\_burn}{$(j)$} has been
called before {\sc dfs\_burn}{$(k)$}.

{\small
\begin{algorithm}[ht]
\caption{ {\bf $\dfs$-burning algorithm}.}\label{dfs-burning}
\begin{algorithmic}[1]
  \Statex
  \Statex{\hspace{-0.5cm}\sc algorithm}
  \Statex{\bf Input:} $\pf\colon V\setminus\{r\}\to\N\cup\{0\}$
  \State $\bnv =\{r\}$
  \State $\damp = \{\,\}$
  \State $\tree= \{\,\}$
  \State execute {\sc dfs\_from}($r$)
  \Statex{\bf Output:} {$\bnv$ and $\tree$}
  \Statex \rule{0.5\textwidth}{.4pt}
  \Statex{\hspace{-0.5cm}\sc auxiliary function}
    \Function{\sc dfs\_from}{$i$}
      \ForAll{$j$ adjacent to $i$ in $G$, from largest numerical
              value to smallest}
	\If{$j\notin\bnv$}
	  \If{$\pf(j)=0$}
	    \State append $j$ to $\bnv$
	    \State append $(i,j)$ to $\tree$
	    \State {\sc dfs\_from}$(j)$
	  \Else
	     \State $\pf(j) = \pf(j)-1$
	     \State append $(i,j)$ to $\damp$
	  \EndIf
       \EndIf	
     \EndFor
   \EndFunction
  \end{algorithmic}
\end{algorithm}
}

\begin{theorem}\label{t2pf}
For every $n\in\N$,
\begin{equation*}\LT_n(t)=\ZP_n(t)\end{equation*}
\end{theorem}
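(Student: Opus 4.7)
The plan is to prove the pointwise identity that, if the $\dfs$-burning algorithm maps $\ba$ to $T$, then $\{v_1,\ldots,v_k\}=Z(\ba)$, where $\bv(T)=(v_1,\ldots,v_k)$; since the algorithm is a bijection, the enumerator identity $\LT_n(t)=\ZP_n(t)$ follows. The first step is to identify the spine $\bv(T)$ with the sequence of vertices burnt at the top of the call stack along the initial recursion chain of the algorithm: because neighbors are examined from largest label down, the first vertex burnt inside {\sc dfs\_from}$(v_{i-1})$ is the largest-labeled child of $v_{i-1}$ in $T$, which is precisely the next vertex visited by the DFS on $T$, and the chain stops at $v_k$ exactly when {\sc dfs\_from}$(v_k)$ burns no new vertex, i.e.\ when $v_k$ is a leaf of $T$.

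I would then track dampenings precisely. Since $v_i$ is the first vertex burnt inside {\sc dfs\_from}$(v_{i-1})$, no nested call has occurred between the opening of {\sc dfs\_from}$(v_{i-1})$ and the examination of $v_i$, so the only dampenings of $v_i$ up to that point arise from the initial segments of {\sc dfs\_from}$(v_m)$ for $0\le m<i$, each of which dampens exactly the unburnt neighbors with label larger than $v_{m+1}$. This yields
\[
a_{v_i} \;=\; 1 + \bigl|\{\,m' : 1\le m' < i,\ v_{m'} < v_i\,\}\bigr|.
\]
Sorting $\{v_1,\ldots,v_k\}$ increasingly as $y_1<\cdots<y_k$, the number of earlier spine vertices below $y_p$ is at most $p-1$, so $a_{y_p}\le p$ for every $p$; thus $\{v_1,\ldots,v_k\}$ is a valid center candidate. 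Since the union of two valid candidates is again valid (as the rank of any element only grows in the union), $Z(\ba)$ is the unique maximum valid subset, and in particular $\{v_1,\ldots,v_k\}\subseteq Z(\ba)$.

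For the reverse inclusion, I would exploit that $v_k$ is a leaf of $T$: every vertex $j$ left unburnt at the start of {\sc dfs\_from}$(v_k)$ must satisfy $\pf(j)>0$, which the same bookkeeping converts into $a_j\ge 2+|\{v_{m'}<j\}|$. Assuming for contradiction the existence of $x\in Z(\ba)\setminus\{v_1,\ldots,v_k\}$ of $Z$-rank $p$, the inequalities $a_x\le p$ and $a_x\ge 2+|\{v_{m'}<x\}|$ combined with the containment $\{v_1,\ldots,v_k\}\subseteq Z(\ba)$ force at least one element of $Z(\ba)\setminus\{v_1,\ldots,v_k\}$ to lie strictly below $x$; iterating this produces an infinite strictly decreasing sequence in $[n]$, which is impossible. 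The main obstacle will be making the dampening bookkeeping airtight across the recursive structure, so that the inequality $a_j\ge 2+|\{v_{m'}<j\}|$ genuinely holds for every unburnt $j$ at the start of {\sc dfs\_from}$(v_k)$, thereby unlocking the descending-chain argument.
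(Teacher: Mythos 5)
Your proposal is correct and follows essentially the same route as the paper: both use the Perkinson--Yang--Yu bijection, identify the spine $\bv(T)$ with the chain of vertices burnt at the top of the recursion, derive $a_{v_i}=1+\big|\{m<i \mid v_m<v_i\}\big|$ from the dampening bookkeeping to show $\{v_1,\dotsc,v_k\}$ is a valid center candidate, and use the condition $\pf(j)>0$ at the call of {\sc dfs\_from}$(v_k)$ to establish maximality. Your minimal-counterexample argument for the reverse inclusion is slightly more explicit than the paper's one-line maximality claim (and your inequality $a_j\geq 2+\big|\{v_{m'}<j\}\big|$ is the sharp form actually needed there), but the underlying approach is identical.
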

\begin{proof}
We show that there exist a bijection $\varphi\colon\PF_n\to\T_n$ such
that, for every $\ba\in\PF_n$, if $T=\varphi(\ba)\in\T_n$, then
$\leg(T)=z(\ba)$.

Let $T$ be the tree given by Algorithm~\ref{dfs-burning} with input
$\pf=\ba-1$ (we know this defines a bijection from $\PF_n$ to $\T_n$
by \cite[Theorem 3]{PYY}).  Now, let $\ell$ be the first value of $i$
where, when {\sc dfs\_from$(i)$} is called, $\pf(j)>0$ whenever
$j\notin\bnv$. If this never occurs, let $\ell$ be the last vertex
joined to $\bnv$.

Let $B=\big(0\!\!=\!\!v_0,\dotsc,v_k\!\!=\!\!i\big)=\bnv$ and
$E=\tree$ at the end of the loop of {\sc dfs\_from$(i)$} (the end of
Line~14) for $i=\ell$, and note that, by definition,
$E=\big((v_0,v_1),\dotsc,(v_{k-1},v_k)\big)$.  Hence,
$\bv(T)=(v_1,\dotsc,v_k)$.

Now, let $X=\{x_1,\dotsc,x_k\}=\{v_1,\dotsc,v_k\}$ with
$x_1<\dotsb<x_k$.

We must prove that:
\begin{enumerate}
\item for every $m\in[k]$, $\ba(x_m)\leq m$;
\item $X$ is maximal for this property;
\end{enumerate}
Clearly, if $x_{i_1}=v_1$, then $\ba(x_{i_1})\leq i_1$ since, by
definition, $v_1=\max\big(\pf^{-1}(\{0\})\big)$ and so
$\ba(x_{i_1})=1$.  Now, suppose that the same holds true for
$x_{i_2}=v_2,\dotsc,x_{i_{\ell-1}}=v_{\ell-1}$, consider
$x_{i_\ell}=v_\ell$ and note that, when {\sc dfs\_from$(v_{\ell-1})$}
is called, $v_\ell$ is the largest value of
$j\notin\{v_0,v_1,\dotsc,v_{\ell-1}\}$ with $\pf(j)=0$. Since
$\pf(v_\ell)$ has been reduced in earlier calls to {\sc
  dfs\_from$(v_m)$} (at Line~13) exactly when $v_m<v_\ell$, since it
is now zero, and since new additions to $\bnv$ will not decrease the
order of $v_\ell$ in the corresponding set, $\ba(x_{i_\ell})\leq
i_\ell$.

When finally {\sc dfs\_from$(v_k)$} is called, $\pf(j)>0$ for all
$j\notin X$. In particular, for each such $j$, the number of elements
in $X$ less than or equal to $j$ is necessarily less than $a_j$.
\end{proof}
More precisely, if $\bv(T)=(v_1,\dotsc,v_k)$, then clearly
$$\ba (v_i)=\big|\big\{k\in[i]\mid v_k\leq v_i\big\}\big|\,.$$
Compare with Definition~\ref{deft} below.
\begin{example} \label{expf}
Let $\ba=341183414 \in \PF_9$. We apply Algorithm~\ref{dfs-burning} to
$\ba$ by drawing $a_j$ empty boxes for each $j\in[9]$ that are filled
with $i$ during the execution of {\sc dfs\_from}{($i$)}, at Line~14 and at Line~10.
Below, {\sc dfs\_from}{($i$)} has been called for, in this order,
$i=0,8,4,3,9,6,7$. Hence, at the moment, $i=7$ and
$\bnv=(0,8,4,3,9,6,7)$.  Since $\pf(j)>0$ for $j\notin\bnv$ (i.e., for
$j=1,2,5$), $\ell=i=7$, and so $\bv=(8,4,3,9,6,7)$.

{\small
$$\begin{array}{rccccccccc}
\cline{6-6}
&&&&&\cx{8}&\\
\cline{6-6}
&&&&&\cx{4}&\\
\cline{6-6}
&&&&&\cx{7}&\\
\cline{6-6}
&&&&&\cx{}&\\
\cline{3-3}\cline{6-6}\cline{8-8}\cline{10-10}
&&\cx{7}&&&\cx{}&&\cx{8}&&\cx{0}\\
\cline{2-3}\cline{6-8}\cline{10-10}
&\cx{7}&\cx{}&&&\cx{}&\cx{8}&\cx{4}&&\cx{8}\\
\cline{2-3}\cline{6-8}\cline{10-10}
&\cx{}&\cx{}&&&\cx{}&\cx{4}&\cx{9}&&\cx{4}\\
\cline{2-10}
&\cx{}&\cx{}&\cx{4}&\cx{8}&\cx{}&\cx{9}&\cx{6}&\cx{0}&\cx{3}\\
\cline{2-10}
&\red\!1\!&\red\!2\!&\cxr{3}&\cxr{4}&\red\!5\!&\cxr{6}&\cxr{7}&\cxr{8}&\cxr{9}\\
\cline{4-5}\cline{7-10}
\mcx{\ba}&\!3\!&\!4\!&\!1\!&\!1\!&\!8\!&\!3\!&
\!4\!&\!1\!&\!4\!
\end{array}$$}

\end{example}

\section{Within parking functions: centers \emph{vs.} runs}\label{bijection}
\begin{definition}\label{deft}
Consider, for a positive integer $n$ and for a permutation $\bw = (w_1, \ldots, w_n) \in\mathfrak{S}_n$,
\begin{align*}
  &f_{w_i}=\left|\big\{k\in[i]\,\mid\, w_k\leq w_i\big\}\right|\,,\
  i=1,\dotsc,n\ ,
\shortintertext{and}
  &t_n (\bw)=(f_1,\dotsc,f_n)\in [1] \times [2] \times \cdots \times [n] \,.
\end{align*}
According to \cite{DGO}, $t_n$ is a bijection between $\mathfrak{S}_n$
and $[1] \times [2] \times \cdots \times [n]$.
\end{definition}

\begin{example}
If $\bw = 521634$, then $f_1 = f_{w_3} = 1$, $f_2 = f_{w_2} = 1$, $f_3
= f_{w_5} = 3$, $f_4 = f_{w_6} = 4$, $f_5 = f_{w_1} = 1$ and $f_6 =
f_{w_4} = 4$. Hence $t(\bw) = 113414 \in [1] \times \cdots \times
[6]$.
\end{example}

Given $\ba \in [n]^n$, let
$$ \label{defrun}
\Run (\ba) = \big\{ \max \ \ba^{-1} (\{ j \}) \mid 1 \leq j \leq \run (\ba) \big\}
$$
if $\run(\ba)>0$, and let $\Run (\ba) = \varnothing$ if $\run(\ba)=0$.
Then $|\Run (\ba)| = \run (\ba)$.

For $A \subseteq [n]$, let
$${Z_n}^{-1} (A) = \big\{ \ba \in [n]^n \mid Z(\ba)=A \big\}$$
and
$${\Run_n}^{-1} (A) = \big\{ \ba \in [n]^n \mid \Run (\ba)=A \big\}.$$

Now let $A = \{ i_1, i_2, \ldots, i_k \} \neq \varnothing$ with $i_1 <
i_2 < \cdots < i_k$ and take $i_0=0$ and $i_{k+1}=n+1$. If $\ba =
(a_1, a_2, \ldots, a_n) \in {Z_n}^{-1} (A)$, then
\begin{itemize}
\item $(a_{i_1},a_{i_2},\ldots,a_{i_k}) \in [1] \times [2] \times
  \cdots \times [k]$,
\item $a_j \in \{ \ell+1, \ldots, n \} = [n] \setminus [\ell]$, if
  $i_{\ell-1} < j < i_\ell$, with $\ell \in [k+1]$.
\end{itemize}
{ \small
$$
\begin{array}{ccccccccc}
{\scriptstyle \neq 1} & \fbox{$a_{i_1} {\scriptstyle \leq 1}$} &
{\scriptstyle \neq \begin{cases} \scriptstyle 1 \\[-5pt] \scriptstyle
    2 \end{cases}} & \fbox{$a_{i_2} {\scriptstyle \leq 2}$} &
{\scriptstyle \neq \begin{cases} \scriptstyle 1 \\[-5pt] \scriptstyle
    2 \\[-5pt] \scriptstyle 3 \end{cases}} & & {\neq \begin{cases}
    \scriptstyle 1 \\[-5pt] \vdots \\[-5pt] \scriptstyle
    k \end{cases}} & \fbox{$a_{i_k} {\scriptstyle \leq k}$} &
{\scriptstyle \neq \begin{cases} \scriptstyle 1 \\[-5pt] \vdots
    \\[-5pt] \scriptstyle k+1 \end{cases}}\\[-5pt]
 \underline{\hspace{1.2cm}} &
\underbracket[.5pt]{\hspace{.8cm}}_{i_1} & \underline{\hspace{1.2cm}}
& \underbracket[.5pt]{\hspace{.8cm}}_{i_2} &
\underline{\hspace{1.2cm}} & \ldots & \underline{\hspace{1.2cm}} &
\underbracket[.5pt]{\hspace{.8cm}}_{i_k} & \underline{\hspace{1.2cm}}
\\
\end{array}
$$}
If $\ba = (a_1, a_2, \ldots, a_n) \in {\Run_n}^{-1} (A)$, then
\begin{itemize}
\item $(a_{i_1},a_{i_2},\ldots,a_{i_k}) \in \mathfrak{S}_k$,
\item $a_j \in [n] \setminus \{ k+1, a_{i_1}, \ldots, a_{i_{\ell-1}}
  \}$, if $i_{\ell-1} < j < i_\ell$, with $\ell \in [k+1]$.
\end{itemize}
{\small
$$
\begin{array}{ccccccccc}
{\scriptstyle \neq k+1} & \fbox{$a_{i_1}$} & {\scriptstyle
  \neq \begin{cases} \scriptstyle k+1 \\[-5pt] \scriptstyle
    a_{i_1} \end{cases}} & \fbox{$a_{i_2}$} & {\scriptstyle
  \neq \begin{cases} \scriptstyle k+1 \\[-5pt] \scriptstyle a_{i_1}
    \\[-5pt] \scriptstyle a_{i_2} \end{cases}} & & {\neq \begin{cases}
    \scriptstyle k+1 \\[-5pt] \ \ \vdots \\[-5pt] \scriptstyle
    a_{i_{k-1}} \end{cases}} & \fbox{$a_{i_k}$} & {\scriptstyle
  \neq \begin{cases} \scriptstyle k+1 \\[-5pt] \ \vdots \\[-5pt]
    \scriptstyle a_{i_k} \end{cases}} \\[-5pt]
\underline{\hspace{1.2cm}} & \underbracket[.5pt]{\hspace{.8cm}}_{i_1}
& \underline{\hspace{1.2cm}} &
\underbracket[.5pt]{\hspace{.8cm}}_{i_2} & \underline{\hspace{1.2cm}}
& \ldots & \underline{\hspace{1.2cm}} &
\underbracket[.5pt]{\hspace{.8cm}}_{i_k} & \underline{\hspace{1.2cm}}
\\
\end{array}
$$}
Clearly, the size of both ${Z_n}^{-1} (A)$ and ${\Run_n}^{-1} (A)$ is
$$ k! (n-1)^{i_1-1} (n-2)^{i_2-1} \cdots (n-k)^{i_k-1}$$
if $|A| = k > 0$, and $(n-1)^n$ if $A = \varnothing$.

We now define two mappings $\Phi, \Psi: [n]^n \to [n]^n$.

If $Z(\ba) = \varnothing$, we define $\Phi(\ba) := \ba$. Otherwise, if
$Z(\ba) = \{ i_1, i_2, \ldots, i_k \} \neq \varnothing$ with $i_1 <
i_2 < \cdots < i_k$, we define $\Phi(\ba)$ as follows. Let $\bb := b_1
b_2 \ldots b_k = {t_k}^{-1} (a_{i_1} a_{i_2} \ldots a_{i_k}) \in
\mathfrak{S}_k$ and $\sigma_\ba \in \mathfrak{S}_n$ be the permutation
of length $n$ defined by
$$
\sigma_\ba (j) =
\begin{cases}
k+1, & \text{if $j=1$;} \\
b_{j-1}, & \text{if $2 \leq j \leq k+1$;} \\
j, & \text{if $k+2 \leq j \leq n$.}
\end{cases}
$$

Finally, let
$$
(\Phi(\ba))(j) :=
\begin{cases}
b_\ell, & \text{if $j = i_\ell \in Z(\ba)$;} \\
\sigma_\ba (a_j), & \text{if $j \notin Z(\ba)$.}
\end{cases}
$$

If $\Run(\ba) = \varnothing$, we define $\Psi(\ba) := \ba$. Otherwise, if $\Run(\ba) = \{ i_1, i_2, \ldots, i_k \} \neq \varnothing$ with $i_1 < i_2 < \cdots < i_k$, we define $\Psi(\ba)$ as follows. Let $\bc := c_1 c_2 \ldots c_k = t_k (a_{i_1} a_{i_2} \ldots a_{i_k}) \in [1] \times [2] \times \cdots \times [k]$ and $\tau_\ba \in \mathfrak{S}_n$ be the permutation of length $n$ defined by
$$
\tau_\ba (j) =
\begin{cases}
\ell+1, & \text{if $j = a_{i_\ell} \in [k]$;} \\
1, & \text{if $j=k+1$;} \\
j, & \text{if $k+2 \leq j \leq n$.}
\end{cases}
$$

Finally, let
$$
(\Psi(\ba))(j) :=
\begin{cases}
c_\ell, & \text{if $j = i_\ell \in \Run(\ba)$;} \\
\tau_\ba (a_j), & \text{if $j \notin \Run(\ba)$.}
\end{cases}
$$

\begin{example}
Let $\ba = 341183414 \in [9]^9$. On the one hand, $Z(\ba) = \{
3,4,6,7,8,9 \}$, ${t_6}^{-1} (a_3 a_4 a_6 a_7 a_8 a_9) = {t_6}^{-1}
(113414) = 521634 \in \mathfrak{S}_6$, so $\sigma_\ba =
7\underline{521634}89 $ and $\Phi(\ba) =
21\underline{52}8\underline{1634}$.
{\small
$$
\begin{array}{c}
\begin{array}{cccccccccccc}
\ba & = & 3 & 4 & \fbox{$1$} & \fbox{$1$} & 8 & \fbox{$3$} &
\fbox{$4$} & \fbox{$1$} & \fbox{$4$} \\[-5pt] &&
\multicolumn{2}{l}{\!\! \underline{\hspace{1cm}}} &
\underbracket[.5pt]{\hspace{.6cm}}_{3} &
\underbracket[.5pt]{\hspace{.6cm}}_{4} & \underline{\hspace{.6cm}} &
\underbracket[.5pt]{\hspace{.6cm}}_{6} &
\underbracket[.5pt]{\hspace{.6cm}}_{7}&
\underbracket[.5pt]{\hspace{.6cm}}_{8}&
\underbracket[.5pt]{\hspace{.6cm}}_{9} \\ \\ \\
\Phi(\ba) & = & 2 & 1 & \fbox{$5$} & \fbox{$2$} & 8 & \fbox{$1$} &
\fbox{$6$} & \fbox{$3$} & \fbox{$4$} \\[-5pt] &&
\multicolumn{2}{l}{\!\!  \underline{\hspace{1cm}}} &
\underbracket[.5pt]{\hspace{.6cm}}_{3} &
\underbracket[.5pt]{\hspace{.6cm}}_{4} & \underline{\hspace{.6cm}} &
\underbracket[.5pt]{\hspace{.6cm}}_{6} &
\underbracket[.5pt]{\hspace{.6cm}}_{7}&
\underbracket[.5pt]{\hspace{.6cm}}_{8}&
\underbracket[.5pt]{\hspace{.6cm}}_{9} \\
\end{array}
\end{array}
$$}
On the other hand, $\Run(\ba) = \{ 8 \}$, $\bc = t_1 (a_8) = t_1 (1) =
1$, so $\tau_\ba = 2\underline{1}3456789$ and
$\Psi(\ba)=3422834\underline{1}4$.  Note that $\ba$ belongs to
$\PF_9$, as well as $\Phi(\ba)$ and $\Psi(\ba)$.
\end{example}

The following theorem summarizes the main properties of $\Phi$ and $\Psi$.

\begin{theorem} \hfill
\begin{enumerate}
\item For all $\ba \in [n]^n$, $z(\ba) = \run (\Phi (\ba))$, $Z(\ba) =
  \Run (\Phi (\ba))$, $\run (\ba) = z(\Psi (\ba))$, and $\Run (\ba) =
  Z(\Psi (\ba))$,
\item $\Phi$ and $\Psi$ are bijections and $\Psi = \Phi^{-1}$,
\item $\Phi (\PF_n) = \PF_n$.
\end{enumerate}
\end{theorem}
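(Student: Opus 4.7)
The plan is to prove the three parts in sequence, with part (1) doing the structural work and parts (2), (3) following by relatively direct computations. For part (1) I analyze $\Phi$; the argument for $\Psi$ is symmetric. Fix $\ba$ with $Z(\ba) = A = \{i_1 < \cdots < i_k\} \neq \varnothing$. Reading off $\Phi(\ba)$ from its definition: at positions in $A$ the values are $b_1, \ldots, b_k$, a permutation of $[k]$ since $\bb \in \mathfrak{S}_k$; at positions $j \notin A$ the value is $\sigma_\ba(a_j)$, and the structural description of ${Z_n}^{-1}(A)$ recalled above forces $a_j \geq \ell+1 \geq 2$ whenever $i_{\ell-1} < j < i_\ell$, so $\sigma_\ba(a_j) \in \sigma_\ba(\{2,\ldots,n\}) = [n] \setminus \{k+1\}$. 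This already yields $\run(\Phi(\ba)) = k$. To strengthen this to $\Run(\Phi(\ba)) = A$, I will show that each value $b_\ell$ does not occur at any position $j > i_\ell$: when $j = i_{\ell'} \in A$ with $\ell' > \ell$ this is immediate because $\bb$ is a permutation; when $j \notin A$ with $i_{\ell'-1} < j < i_{\ell'}$ and $\ell' > \ell$, the sharpened bound $a_j \geq \ell + 2$ combined with a case-split on whether $a_j$ lies in $\{2,\ldots,k\}$, equals $k+1$, or exceeds $k+1$ excludes $\sigma_\ba(a_j) = b_\ell$ in each case.

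For part (2), with (1) in hand I verify $\Psi \circ \Phi = \mathrm{id}$ pointwise. At positions $i_\ell \in A = \Run(\Phi(\ba))$, the auxiliary sequence $\bc$ is $t_k(b_1,\ldots,b_k) = (a_{i_1},\ldots,a_{i_k})$ by the very definition of $\bb$, so the value at $i_\ell$ is $a_{i_\ell}$. At $j \notin A$, the value is $\tau_{\Phi(\ba)}(\sigma_\ba(a_j))$; a short calculation using $\Phi(\ba)(i_\ell) = b_\ell$ and the piecewise definitions shows $\tau_{\Phi(\ba)} \circ \sigma_\ba = \mathrm{id}$ on all of $[n]$. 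The degenerate case $Z(\ba) = \varnothing$ is equivalent to $\Run(\ba) = \varnothing$ (both say $1$ is not a value of $\ba$) and is handled by the identity clauses in the definitions of $\Phi$ and $\Psi$. Symmetric reasoning yields $\Phi \circ \Psi = \mathrm{id}$, so both maps are bijections with $\Psi = \Phi^{-1}$.

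For part (3), since $\Phi$ is a bijection on the finite set $[n]^n$, it suffices to prove $\Phi(\PF_n) \subseteq \PF_n$. Writing $c_i^\ba = |\{j : a_j = i\}|$, the parking condition reads $\sum_{i=1}^m c_i^\ba \geq m$ for every $m \in [n]$. From part (1) and direct counting I extract: (i) $c_m^{\Phi(\ba)} \geq 1$ for every $m \in [k]$, thanks to the permutation $\bb$ on $A$; (ii) $c_{k+1}^{\Phi(\ba)} = 0$; (iii) $c_m^{\Phi(\ba)} = c_m^\ba$ for $m \geq k+2$, since $\sigma_\ba$ fixes those values and they never occur at $A$; and $\sum_{i=1}^k c_i^{\Phi(\ba)} = \sum_{i=1}^{k+1} c_i^\ba$, both equal to $k + |\{j \notin A : a_j \leq k+1\}|$. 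Combining these gives $\sum_{i=1}^m c_i^{\Phi(\ba)} = \sum_{i=1}^m c_i^\ba$ for all $m \geq k+1$ (so the parking condition for $\Phi(\ba)$ transfers from $\ba$), while at $m \leq k$ it is trivial by (i). Hence $\Phi(\ba) \in \PF_n$.

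The main obstacle, I expect, is the case analysis in part (1) that pins down $i_\ell$ as the rightmost occurrence of $b_\ell$: it is there that the sharpened lower bound $a_j \geq \ell' + 1$ from the box diagram interacts critically with the piecewise behavior of $\sigma_\ba$. Once this is secured, parts (2) and (3) reduce to essentially bookkeeping.
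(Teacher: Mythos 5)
Your proposal is correct and follows essentially the same route as the paper's proof: part (1) via the observation that the values at center positions form $[k]$ while $k+1$ never occurs, plus the last-occurrence argument pinning down $\Run(\Phi(\ba))=Z(\ba)$; part (2) via $\bc=t_k(\bb)=(a_{i_1},\dotsc,a_{i_k})$ and $\tau_{\Phi(\ba)}=\sigma_\ba^{-1}$; part (3) by splitting the parking inequality at $m=k$ and noting the preimage counts of $[m]$ are unchanged for $m\geq k+1$. Your version is only superficially different (cumulative counts $c_i$ instead of preimage sets in part (3)), so no further comparison is needed.
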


\begin{proof} \hfill

(1) Let $Z(\ba)=\{i_1,\dotsc,i_k\}$ with $i_1<\dotsb<i_k$ and
  $\Phi(\ba)=:\bd=d_1\dotsb d_n$. On the one hand, $k \leq \run (\bd)$
  since $\{d_{z_1},\dotsc, d_{z_k}\}= [k]$. On the other hand,
  $k+1\notin\{d_1,\dotsc, d_n\}$ because $k+1 \notin \{d_{z_1},\dotsc,
  d_{z_k}\}$ and there is no $j \in [n] \setminus Z(\ba)$ such that
  $\ba(j) = 1$. Hence $z(\ba) =k= \run (\bd)$.

Let $i_j \in Z(\ba)$.  First, note
that $d_{i_j} \leq j \leq k$. Second, note that if $\ell >
i_j$, then $d_\ell\neq d_{i_j}$; if $\ell = i_p \in
Z(\ba) \setminus \{ i_j \}$, then $d_\ell \neq
d_{i_j}$ because $d_{z_1}\dotsb d_{z_k}\in \mathfrak{S}_k$;
if $\ell \in [n] \setminus \left(
    [i_j] \cup Z(\ba) \right)$, then $\ba(\ell) > j + 2$ and
    $d_\ell \neq d_{i_j}$, since $\sigma_\ba \in
    \mathfrak{S}_n$.

Similarly, one can show that $\run (\ba) = z(\Psi (\ba))$ and $\Run
(\ba) = Z(\Psi (\ba))$.

(2) Given $\ba \in [n]^n$, we have $Z(\ba) = \Run (\Phi (\ba))$,
  $\Run (\ba) = Z(\Psi (\ba))$, $\tau_{\Phi(\ba)} = {\sigma_\ba}^{-1}$
  and $\sigma_{\Psi(\ba)} = {\tau_\ba}^{-1}$. Hence $(\Psi \circ \Phi)
  (\ba) = \ba = (\Phi \circ \Psi) (\ba)$.

(3) Let $\ba \in \PF_n$ and $k=z(\ba)$. If $j \leq k$,
  $|\Phi (\ba)^{-1} ([j])| \geq j$, since $[j] \subseteq [k] \subseteq
  \Phi (\ba)([n])$. If $j > k$, then $\Phi (\ba)^{-1} ([j]) = \ba^{-1}
  ([j])$ and so $|\Phi (\ba)^{-1} ([j])| = |\ba^{-1} ([j])| \geq j$
  because $\ba \in \PF_n$. Since $\Phi (\PF_n) \subseteq \PF_n$ and
  $\Phi$ is a bijection, $\Phi (\PF_n) = \PF_n$. \qedhere
\end{proof}

\begin{theorem}\label{pf2pf}
For every $n\in\N$,
$$\ZP_n(t)=\RP_n(t)\,.$$
\end{theorem}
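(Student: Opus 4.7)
The plan is to deduce Theorem~\ref{pf2pf} as an immediate consequence of the structural theorem about $\Phi$ and $\Psi$ that precedes it. All the work has effectively been done: part~(1) gives us the statistic-transporting identity $z(\ba) = \run(\Phi(\ba))$, part~(2) tells us $\Phi$ is a bijection on $[n]^n$, and part~(3) asserts the crucial restriction property $\Phi(\PF_n) = \PF_n$.

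Concretely, I would simply change variables in the sum defining $\ZP_n(t)$ via the substitution $\bb = \Phi(\ba)$. Since $\Phi$ restricts to a bijection $\PF_n \to \PF_n$, as $\ba$ ranges over $\PF_n$ so does $\bb$, and each term $t^{z(\ba)}$ becomes $t^{\run(\bb)}$. Thus
\[
\ZP_n(t) = \sum_{\ba \in \PF_n} t^{z(\ba)} = \sum_{\ba \in \PF_n} t^{\run(\Phi(\ba))} = \sum_{\bb \in \PF_n} t^{\run(\bb)} = \RP_n(t).
\]

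There is no real obstacle: the theorem is a one-line corollary of the preceding result, and the actual content — constructing $\Phi$ and $\Psi$, verifying their statistic-matching properties, and checking that $\Phi$ preserves the parking-function condition — has already been carried out. If one wanted a symmetric presentation, one could equally well use $\Psi$ in the other direction, invoking $\run(\ba) = z(\Psi(\ba))$, and the argument is identical. So the proof amounts to just citing the relevant parts of the previous theorem and performing the change of variables.
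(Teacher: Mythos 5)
Your proposal is correct and matches the paper's intent exactly: the paper states Theorem~\ref{pf2pf} with no written proof precisely because it is the immediate change-of-variables corollary of the preceding theorem's parts (1)--(3), which is what you carry out. Nothing is missing.
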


\section{From parking functions to rook words}\label{sequences}

\subsection{Restricted integer sequences}

We start this section by considering a general situation of
independent interest.

\begin{definition}\label{def:sbr}
Let, for a positive integer $k$ and for
$\bl=(\ell_1,\dotsc,\ell_k)\in\N^k$, $\bL=(L_1,\dotsc,L_k)\in\N^k$ be
the cumulative sum of $\bl$, i.e.,
\begin{gather*}
L_i=\ell_1+\ell_2+\dotsb+\ell_i\,,\quad i=1,\dotsc,k\,,
\shortintertext{and consider the set}
\sbr{\ell_1,\dotsc,\ell_k}=\big\{(x_0,x_1,\dotsc,x_k)\in\Z^{k+1}\mid
x_0=0;\forall 1\leq i\leq k\,, x_{i-1}<x_i\leq L_i\big\}
\end{gather*}
\end{definition}

\begin{lemma}\label{lem:main} For every positive integers
$k,\ell_1,\dotsc,\ell_k$, if $i<k$ and $\ell_{i+1}>1$, then
\begin{align*}
&\nbr{\ell_1,\dotsc,\ell_{i-1},\ell_i+1,\ell_{i+1}-1,\ell_{i+2},\dotsc,\ell_k}\\
&\hphantom{\nbr{\ell_1,\dotsc,\ell_{k-1},\ell_k+1}}=
\nbr{\ell_1,\dotsc,\ell_k}+\nbr{\ell_1,\dotsc,\ell_{i-1}}
\nbr{\ell_{i+1}-1,\ell_{i+2},\dotsc,\ell_k}
\shortintertext{whereas}
&\nbr{\ell_1,\dotsc,\ell_{k-1},\ell_k+1}=
\nbr{\ell_1,\dotsc,\ell_k}+\nbr{\ell_1,\dotsc,\ell_{k-1}}
\shortintertext{and, if $\ell_1>1$,}
&\nbr{\ell_1-1,\ell_2,\dotsc,\ell_k}=\nbr{\ell_1,\dotsc,\ell_k}-
\nbr{\ell_1+\ell_2-1,\ell_3,\dotsc,\ell_k}\,.
\end{align*}
\end{lemma}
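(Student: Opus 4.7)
The plan is to interpret $\nbr{\ell_1,\dotsc,\ell_k}$ as the cardinality of the set $\sbr{\ell_1,\dotsc,\ell_k}$ of strictly increasing integer sequences $(0=x_0,x_1,\dotsc,x_k)$ satisfying $x_j\leq L_j$ for every $j\in[k]$, and to derive each identity from a partition or bijection at the level of these sets. The first two identities rest on the same observation: passing from $\bl$ to $(\ell_1,\dotsc,\ell_{i-1},\ell_i+1,\ell_{i+1}-1,\ell_{i+2},\dotsc,\ell_k)$ leaves every cumulative sum unchanged except $L_i$, which increases by one. Hence $\sbr{\ell_1,\dotsc,\ell_k}$ sits inside the enlarged set as the sequences with $x_i\leq L_i$, while its complement consists of those with $x_i=L_i+1$. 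In this complement, the prefix $(x_0,\dotsc,x_{i-1})$ ranges freely over $\sbr{\ell_1,\dotsc,\ell_{i-1}}$ (the constraint $x_{i-1}<L_i+1$ is automatic from $x_{i-1}\leq L_{i-1}$), and the translated suffix $y_j:=x_{i+j}-(L_i+1)$ ranges independently over $\sbr{\ell_{i+1}-1,\ell_{i+2},\dotsc,\ell_k}$, because $L_{i+j}-(L_i+1)$ is the $j$-th cumulative sum of the latter list; multiplying yields the second summand. The second identity is the boundary case $i=k$: enlarging $\ell_k$ by one admits only the new value $x_k=L_k+1$, whose prefix ranges over $\sbr{\ell_1,\dotsc,\ell_{k-1}}$.

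The third identity requires a genuine bijection rather than a partition. Subtracting one from $\ell_1$ decreases every $L_j$ by one, so $\sbr{\ell_1-1,\ell_2,\dotsc,\ell_k}$ is exactly the subset of $\sbr{\ell_1,\dotsc,\ell_k}$ in which $x_j<L_j$ for every $j\in[k]$. It therefore suffices to biject the complementary set---the sequences in which $x_j=L_j$ for at least one $j$---with $\sbr{\ell_1+\ell_2-1,\ell_3,\dotsc,\ell_k}$, whose cumulative sums are $L_2-1,L_3-1,\dotsc,L_k-1$. My plan is to define the map by setting $j_*:=\min\{j\in[k]:x_j=L_j\}$, deleting the coordinate $x_{j_*}$, and subtracting one from the rest of the tail: $y_i:=x_i$ for $i<j_*$ and $y_i:=x_{i+1}-1$ for $i\geq j_*$. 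The inverse reads $j_*$ off the $y$-sequence as the smallest index $i$ with $y_i\geq L_i$ (with the convention $j_*=k$ if no such $i$ exists), reinserts $x_{j_*}:=L_{j_*}$, and sets $x_i:=y_{i-1}+1$ for $i>j_*$.

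The main obstacle is verifying this last map in both directions. The only nontrivial monotonicity step is at the seam: $y_{j_*-1}<y_{j_*}$ reads $x_{j_*-1}<x_{j_*+1}-1$, which follows from $x_{j_*-1}\leq L_{j_*-1}\leq L_{j_*}-1$ together with $x_{j_*+1}\geq L_{j_*}+1$, while matching the new upper bounds $y_i\leq L_{i+1}-1$ uses only $L_i\leq L_{i+1}$ and the original constraints. For invertibility, the minimality of $j_*$ forces $y_i=x_i\leq L_i-1$ for every $i<j_*$, whereas $y_{j_*}=x_{j_*+1}-1\geq L_{j_*}$, so $j_*$ is uniquely recoverable from the $y$-sequence. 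Finally, the hypothesis $\ell_1>1$ enters only to ensure that the first entry of $(\ell_1-1,\ell_2,\dotsc,\ell_k)$ on the right-hand side of identity 3 is positive, so that the identity is well-posed; the mechanics of the bijection itself go through whenever all $\ell_j$ are positive.
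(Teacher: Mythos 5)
Your proof is correct. For the first two identities it is essentially the paper's argument: enlarge $\ell_i$ by one, observe that only $L_i$ changes, and split the enlarged set into $\sbr{\ell_1,\dotsc,\ell_k}$ plus the tuples with $x_i=L_i+1$, which factor as a prefix in $\sbr{\ell_1,\dotsc,\ell_{i-1}}$ times a translated suffix in $\sbr{\ell_{i+1}-1,\dotsc,\ell_k}$. For the third identity, however, your decomposition genuinely differs from the paper's. The paper embeds $\sbr{\ell_1-1,\ell_2,\dotsc,\ell_k}$ into $\sbr{\ell_1,\dotsc,\ell_k}$ via the shift $(0,x_1,\dotsc,x_k)\mapsto(0,x_1+1,\dotsc,x_k+1)$, identifies the complement of the image as the tuples with $y_1=1$, and bijects those with $\sbr{\ell_1+\ell_2-1,\ell_3,\dotsc,\ell_k}$ by deleting $y_1$ and subtracting one. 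You instead use the literal containment $\sbr{\ell_1-1,\ell_2,\dotsc,\ell_k}\subseteq\sbr{\ell_1,\dotsc,\ell_k}$ (the tuples with $x_j<L_j$ for all $j$) and biject the complement --- tuples attaining some bound $x_j=L_j$ --- with the target by deleting the first tight coordinate $x_{j_*}$ and shifting the tail down by one; your verification of the seam inequality and of the recoverability of $j_*$ from the image is what makes this work, and it is sound. Your route is slightly heavier for this one identity, but it has the side benefit of being structurally the same ``first index where $x_i\geq L_i$'' decomposition that the paper later exploits in the proof of Theorem~\ref{th:main}, so nothing is lost. One cosmetic slip: $\ell_1-1$ appears on the \emph{left}-hand side of the third identity, not the right, but the role you assign to the hypothesis $\ell_1>1$ is the intended one.
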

\begin{proof}
We present here a bijective proof.  Note that, for every $i<k$,
\begin{align*}
&\sbr{\ell_1,\dotsc,\ell_k}\subseteq\sbr{\ell_1,\dotsc,\ell_{i-1},\ell_i+1,
\ell_{i+1}-1,\ell_{i+2},\dotsc,\ell_k}\,.\\
\shortintertext{But, by definition,}
&(x_0,\dotsc,x_k)\in\sbr{\ell_1,\dotsc,\ell_{i-1},\ell_i+1,\ell_{i+1}-1,\ell_{i+2},
  \dotsc,\ell_k}\setminus\sbr{\ell_1,\dotsc,\ell_k}\\
&\iff\left\{\begin{array}{l}
 x_0=0\\
 x_i=L_i+1\\
\ x_{j-1}<x_j\leq L_j\  \text{for every $j\neq i$ with $1\leq j\leq k$,}
\end{array}\right.\\
&\iff\left\{\begin{array}{l}
 (x_0,\dotsc,x_{i-1})\in\sbr{\ell_1,\dotsc,\ell_{i-1}}\\[5pt]
 (x_i-L_i-1,\dotsc,x_k-L_i-1)\in\sbr{\ell_{i+1},\dotsc,\ell_k}
\end{array}\right.
\end{align*}

For the second statement, note that also
$\sbr{\ell_1,\dotsc,\ell_k}\subseteq
\sbr{\ell_1,\dotsc,\ell_{k-1},\ell_k+1}$ and that
$(x_0,\dotsc,x_k)\in\sbr{\ell_1,\dotsc,\ell_{k-1},\ell_k+1}\setminus
\sbr{\ell_1,\dotsc,\ell_k}$ if and only if $x_k=\ell_k+1$ and
$(x_0,\dotsc,x_{k-1})\in\sbr{\ell_1,\dotsc,\ell_{k-1}}$.

Finally, for the third statement, note that, by definition, if
\begin{align*}
&(0,x_1,\dotsc,x_k)\in\sbr{\ell_1-1,\ell_2,\dotsc,\ell_k}\,,
\shortintertext{then}
&\left\{\begin{array}{l}
x_1+1>1\\[2.5pt]
(0,x_1+1,\dotsc,x_k+1)\in\sbr{\ell_1,\ell_2,\dotsc,\ell_k}\,.
\end{array}\right.
\shortintertext{In fact, given a $k$-tuple $(y_1,\dotsc,y_k)\in\N^k$,}
&\left\{\begin{array}{l}
(0,y_1,\dotsc,y_k)\in\sbr{\ell_1,\ell_2,\dotsc,\ell_k}\\[2.5pt]
(0,y_1-1,\dotsc,y_k-1)\notin\sbr{\ell_1-1,\ell_2,\dotsc,\ell_k}
\end{array}\right.
\shortintertext{if and only if}
&\left\{\begin{array}{l}
y_1=1\\[2.5pt]
(0,y_2-1,\dotsc,y_k-1)\in\sbr{\ell_1+\ell_2-1,\ell_3,\dotsc,\ell_k}.\qedhere
\end{array}\right.
\end{align*}
\end{proof}

\begin{remark}
Let, for any $\bx=(0,x_1,\dotsc,x_k)\in\Z^{k+1}$,
$\by=(y_1,\dotsc,y_k)=(x_1-1,x_2-2,\dotsc, x_k-k)$.  Then $\bx\in
\sbr{\ell_1,\dotsc,\ell_k}$ if and only if $0\leq y_1\leq L_1-1$ and
$y_i\leq y_{i+1}\leq L_{i+1}-(i+1)$ for every $i=1,2,\dotsc,k-1$.

Hence, if $(\ell_1,\dotsc,\ell_k)$ is a \emph{composition of $n$}
(i.e., $n=L_k$) we may represent the elements of
$\sbr{\ell_1,\dotsc,\ell_k}$ by lattice paths from $(0,0)$ to
$(k,n-k)$ that are contained in the region between the $x$ axis and
the path $P$ that has the same ends and the property that the height
of the $i$th horizontal step is $L_i-i$ for every $i=1,2,\dotsc,k$.
See Figure~\ref{fig1} for an example.  Hence,
\begin{equation}
\nbr{\ell_1,\dotsc,\ell_k}=\det\limits_{1\leq i,j\leq k}\Bigg(
\binom{\ell_1+\dotsb+\ell_i-i+1}{j-i+1}\Bigg)\,.
\label{eq:contas1}
\end{equation}
follows (cf. \cite[Theorem 10.7.1]{Krat}). Note that
Lemma~\ref{lem:main} may easily be proved by using the characteristic
properties of determinants.
\end{remark}

\begin{example}
Note that $(0,1,2)$ is a subsequence of $\bx=(0,1,2,7,9)$ but
$(0,1,2,3)$ is not. Let $S$ be the set of elements of $\sbr{3,1,5,2}$
with this property,
$$S=\{(0,1,2,x_3,x_4)\in\sbr{3,1,5,2}\mid x_3>3\}$$
and note that the lattice paths associated with the elements of $S$
are those which start by $2$ horizontal steps, followed by a vertical
step (cf. Figure~\ref{fig1}).  Now,
$(0,1,2,x_3,x_4)\mapsto(0,x_3-3,x_4-3)$ defines a bijection between
$S$ and $\sbr{(3+1+5)-3,2}=\sbr{6,2}$.

\begin{figure}[ht]
\centering
\begin{tikzpicture}[scale=.85]
\draw[->] (0,0) -- (4.3,0);
\draw[->] (0,0) -- (0,7.3);
\draw[very thick,gray, pattern color=gray,pattern=north east lines, fill] (4,0) -- (0,0) -- (0,2) -- (2,2) -- (2,6) to node[above] {$P_\downarrow$}   (3,6) -- (3,7) -- (4,7) ;
\tikzstyle{every node}=[circle,inner sep = 0.25pt, draw=white, fill=white]
\foreach \y in {1,2,3,4,5,6,7}
\draw[yshift=\y cm] (-5pt,0pt) -- (5pt,0pt) ;
\foreach \y in {0,1,2}
\foreach \x in {0,1,2,3,4}
\draw [fill] (\x , \y) circle[radius=0.05cm];
\foreach \y in {3,4,5,6}
\foreach \x in {2,3,4}
\draw [fill] (\x , \y) circle[radius=0.05cm];
\foreach \x in {3,4}
\draw [fill] (\x , 7) circle[radius=0.05cm];
\draw[very thick] (0,0) to node[above=1pt] {$0$} (1,0)  to node[above=1pt] {$0$} (2,0) -- (2,4)  to node[above=1pt] {$4$} (3,4) -- (3,5)
  to node[above=1pt] {$5$} (4,5) -- (4,7)  ;
\end{tikzpicture}
\caption{Lattice path representation of $(0,1,2,7,9)\in\sbr{3,1,5,2}$.}
\label{fig1}
\end{figure}
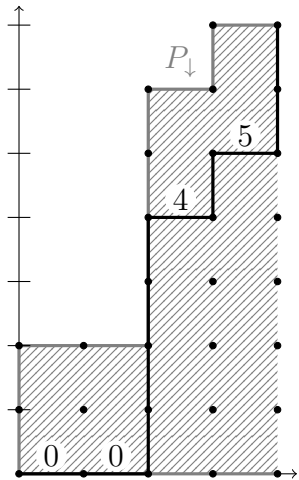

Consider the $5$-composition $(3, 1, 5, 2, 4)$ of $15$, define
similarly to $S$ the four sets $T\subseteq\sbr{1,5,2,4}$,
$U\subseteq\sbr{5,2,4,3}$, $V\subseteq\sbr{2,4,3,1}$ and
$W\subseteq\sbr{4,3,1,5}$. Then $|S|+|T|+|U|+|V|+|W|=
\left|\begin{smallmatrix} 6 & 15 \\1 & 7 \end{smallmatrix}\right|
+\left|\begin{smallmatrix} 5 & 10 \\ 1 & 8 \end{smallmatrix}\right|
+\left|\begin{smallmatrix} 8 & 28 \\ 1 & 10 \end{smallmatrix}\right|
+\left|\begin{smallmatrix} 6 & 15 \\ 1 & 6 \end{smallmatrix}\right|
+\left|\begin{smallmatrix} 5 & 10 \\ 1 & 9 \end{smallmatrix}\right|
=27+30+52+21+35=165$.  A similar construction for another
$5$-composition $c$ of $15$ gives again this number.  If, e.g., $c=(2,
1, 7, 3, 2)$, we obtain in the same manner $\left|\begin{smallmatrix}
7 & 21 \\ 1 & 9 \end{smallmatrix}\right| +\left|\begin{smallmatrix} 8
& 28 \\ 1 & 9 \end{smallmatrix}\right| +\left|\begin{smallmatrix} 9 &
36 \\ 1 & 10\end{smallmatrix}\right| +\left|\begin{smallmatrix} 4 & 6
\\ 1 & 4 \end{smallmatrix}\right| +\left|\begin{smallmatrix} 2 & 1
\\ 1 & 8 \end{smallmatrix}\right|=42+44+54+10+15=165$.
\end{example}

The result that follows generalizes this situation.

\begin{definition}
Given integers $t$, $r$ and $k$ such that $0<r<k<n$, and a $k$-composition
\hbox{$\bl=(\ell_1,\dotsc,\ell_k)\in\N^{k}$} of $n$, let
\begin{equation*}\label{the:cyl2}
s(\bl):=\sum_{i=0}^{k-1}\bbr{{\textstyle(\sum_{j=1}^r\ell_{i+j})+t,
\ell_{i+r+1},\dotsc,\ell_{i+k-1}}}\,,
\end{equation*}
where indices are to be read modulo $k$.
\end{definition}
\begin{theorem}\label{th:main}
The value of $s(\bl)$ does not depend on the $k$-composition $\bl$.
\end{theorem}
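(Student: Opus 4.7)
My plan is to prove that $s(\bl)$ is invariant under two elementary operations on compositions of $n$ into $k$ positive parts: (i) cyclic shifts, and (ii) adjacent transfers $\bl \mapsto \bl^+$ given by $\ell_j^+ = \ell_j + 1$, $\ell_{j+1}^+ = \ell_{j+1} - 1$ (indices cyclic, $\ell_{j+1} \geq 2$). Since any two compositions of $n$ into $k$ positive parts are connected by a sequence of such operations, this will yield the theorem. Invariance under (i) is immediate from the definition of $s$, and by (i) we may assume the transfer is $\bl \mapsto (\ell_1+1, \ell_2-1, \ell_3, \ldots, \ell_k)$.

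The next step will be to compute $\Delta s := s(\bl^+) - s(\bl) = \sum_{i=0}^{k-1} \Delta T_i$ summand by summand, writing $T_i = \nbr{m_1,\ldots,m_{k-r}}$ with $m_1$ the sum entry. Since $T_i$ cyclically omits $\ell_i$, the change $\Delta T_i$ splits into cases by the positions of $\ell_1$ and $\ell_2$ in $T_i$: both appearing in $m_1$ (contribution zero); only $\ell_2$ present, i.e.\ $i=1$, handled by the third case of Lemma~\ref{lem:main} applied to the decrement of $m_1$; only $\ell_1$ present, i.e.\ $i=2$ with $\ell_1 = m_{k-r}$, handled by the second case; $\ell_1$ last in $m_1$ with $\ell_2 = m_2$, handled by the first case with $j=1$; and both as individual arguments, handled by the first case with $j\geq 2$. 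Collecting all contributions, the calculation should boil down to the identity
\[
A_1' \;=\; \sum_{i=2}^{k-r} A_i\,B_i \;+\; B_{k-r+1},
\]
where $A_1' := \nbr{\ell_2+\cdots+\ell_{r+2}+t-1,\ell_{r+3},\ldots,\ell_k}$, $A_i := \nbr{\ell_{i+1}+\cdots+\ell_{i+r}+t,\ell_{i+r+1},\ldots,\ell_k}$ for $2 \leq i \leq k-r$, and $B_i := \nbr{\ell_2-1,\ell_3,\ldots,\ell_{i-1}}$ for $2 \leq i \leq k-r+1$, with the convention $B_2 := 1$.

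The heart of the proof will be a bijective verification of this identity. Setting $L_i = \ell_1+\cdots+\ell_i$, the quantity $A_1'$ enumerates strictly increasing sequences $(z_1,\ldots,z_{k-r-1})$ satisfying $L_1 < z_j \leq L_{r+j+1}+t-1$. To each such sequence I will assign its \emph{transition index}
\[
\tau \;:=\; \min\bigl\{\, i \geq 2 : z_{i-1} \geq L_i \,\bigr\},
\]
setting $\tau := k-r+1$ if no $i \leq k-r$ qualifies. For $\tau = i \in \{2,\ldots,k-r\}$, the initial segment $(z_1,\ldots,z_{i-2})$ automatically satisfies $z_j \leq L_{j+1}-1$ and so represents an element of $B_i$, while the translated tail $(z_{i-1}+1,\ldots,z_{k-r-1}+1)$ satisfies $L_i < \cdot \leq L_{i+r+j-1}+t$ and so represents an element of $A_i$. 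For $\tau = k-r+1$, the whole sequence is itself a $B_{k-r+1}$-sequence. The inverse concatenates a $B_i$-sequence with the translated $A_i$-sequence, and a direct check using $z_{i-2} < L_{i-1} \leq L_i \leq z_{i-1}$ confirms this is a bijection.

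Three edge cases will need separate remarks. When $r=1$ the ``boundary'' index coincides with $i=0$, but the same formulas still apply. When $r=k-2$ the index range $\{3,\ldots,k-r\}$ is empty and the identity reduces to $A_1' = A_2 + B_{k-r+1}$. When $r=k-1$ the summand has a single argument and direct computation yields $s(\bl) = rn + kt$, so the theorem is immediate. The main technical obstacle will be keeping careful track of cyclic index conventions in the case analysis leading to the identity.
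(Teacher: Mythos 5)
Your proof follows essentially the same route as the paper's: reduce to invariance under a single adjacent transfer (modulo cyclic shifts), apply Lemma~\ref{lem:main} to each summand, and verify the resulting product identity bijectively via a ``first crossing'' index --- your transition index $\tau$ is exactly the paper's function $f(0,x_1,\dotsc,x_{k-r-1})=\min\{i\mid x_i\geq L_i\}$ (with default value $k-r$). The argument is correct; only the bookkeeping differs, since you transfer mass from $\ell_2$ to $\ell_1$ while the paper transfers from $\ell_1$ to $\ell_k$.
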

\begin{proof}
Note that, by definition, if we cyclically permute the elements of
$\bl$ the value of $s(\bl)$ does not change.  Hence, it is sufficient
to prove that, given two $k$-compositions, $\bm=(m_1,\dotsc,m_k)$ and
$\bl=(\ell_1,\dotsc,\ell_k)$ such that
$$(m_1,m_2,\dotsc,m_{k-1},m_k)=(\ell_1-1,\ell_2,\dotsc,\ell_{k-1},\ell_k+1)\,,$$
we must have $s(\bm)=s(\bl)$.

Let $s_i(\bl)=\big|\big\langle{{(\textstyle\sum_{j=1}^r\ell_{i+j})+t,
    \ell_{i+r+1},\dotsc,\ell_{i+k-1}}}\big\rangle \big|$ and define
$s_i(\bm)$ similarly. Then
$s_0(\bm)-s_0(\bl)=\sbr{L_r+t-1,\ell_{r+1},\dotsc,\ell_{k-1}}-
\sbr{L_r+t,\ell_{r+1},\dotsc,\ell_{k-1}}$, which is the negative of
$\sbr{L_{r+1}+t-1,\ell_{r+2},\dotsc,\ell_{k-1}}$ by
Lemma~\ref{lem:main}.

 In general, by subtracting and
subsequently applying Lemma~\ref{lem:main} term by term, we obtain
\begin{align*}
\sum_{i=0}^{k-1}(s_i(\bm)-s_i(\bl))=
 &\sum_{i=0}^{k-r}(s_i(\bm)-s_i(\bl))\\
=&-\bbr{L_{r+1}+t-1,\ell_{r+2},\dotsc,\ell_{k-1}}\\
&+\bbr{{\textstyle(\sum_{j=1}^{r}\ell_{j+1})+t,\ell_{r+2},\dotsc,\ell_{k-1}}}\\
&+\bbr{{\textstyle(\sum_{j=1}^{r}\ell_{j+2})+t,\ell_{r+3},\dotsc,\ell_{k-1}}}
\,\bbr{\ell_1-1}\\
&\qquad\qquad\qquad\quad\vdots\hphantom{%
\bbr{{\textstyle(\sum_{j=1}^{r}\ell_{j+k-r})+t}}}\quad\!\vdots\\
&+\bbr{{\textstyle(\sum_{j=1}^{r}\ell_{j+k-r-1})+t}}\,
\bbr{\ell_1-1,\ell_2,\dotsc,\ell_{k-r-2}}\\
&+\bbr{\ell_1-1,\ell_2,\dotsc,\ell_{k-r-1}}
\end{align*}
We prove that this number is zero by proving that the negative of the
first summand, the size of
$\mathfrak{X}=\sbr{L_{r+1}+t-1,\ell_{r+2},\dotsc,\ell_{k-1}}$, is the
sum of the other summands, each of which counts the elements with the
same image by the function $f\colon\mathfrak{X}\to[k-r]$ such that
$$f(0,x_1,\dotsc,x_{k-r-1})=
\begin{cases}
k-r,&\text{if $x_i< L_i,\ \forall\, i\leq k-r-1$;}\\[2.5pt]
\min\{i \mid x_i\geq L_i\}, &\text{otherwise.}
\end{cases}$$

First, note that $f(X)=k-r$ if and only if
$X\in\sbr{\ell_1-1,\ell_2,\dotsc,\ell_{k-r-1}}$.  If
$X\notin\sbr{\ell_1-1,\ell_2,\dotsc,\ell_{k-r-1}}$, then
\begin{align*}
f(X)\geq i&\iff\min\{i \mid x_i\geq L_i\}\geq i\\
&\iff\forall_{j<i}\,,\ x_j<L_j
\shortintertext{and hence}
f(X)= i &\iff (\forall_{j<i}\,,\ x_j<L_j) \wedge x_i\geq L_i \,.
\end{align*}
Finally,
$$(0,x_1,\dotsc,x_{k-r-1})\mapsto\Big(
(0,x_1,\dotsc,x_{i-1}),(0,x_i-L_i+1,\dotsc,x_{k-r-1}-L_i+1) \Big)$$
defines a bijection between $f^{-1}(\{i\})\subseteq\mathfrak{X}$
and the set
$$\sbr{\ell_1-1,\ell_2,\dotsc,\ell_{i-1}}
\times\big\langle{\textstyle(\sum_{j=1}^{r}\ell_{j+i})+t,
\ell_{r+i+1},\dotsc,\ell_{k-1}}\big\rangle\,.\qedhere$$
\end{proof}

\subsection{Counting parking functions and rook words with a given type}

 Recall that a parking function of length $n$ is a tuple
$\ba=(a_1,\dotsc,a_n)\in[n]^n$ such that the
$i$th entry \emph{in ascending order} is always at most $i\in[n]$.
In other words,
$$\ba\in\PF_n  \text{ if, for every $i\in[n]$, }\big|\ba^{-1}([i])\big|\geq i\,.$$

\begin{definition}\label{def:var}
Let $\ba=(a_1,\dotsc,a_n)\in\N^n$ and suppose that
$\{a_1,\dotsc,a_n\}=\{x_1,\dotsc,x_k\}$ with $x_i<x_j$ whenever $1\leq
i<j\leq k$.\\[2.5pt]
The \emph{reduced image of $\ba$} is
\begin{align*}
&\rim(\ba)=(x_1-1,\dotsc,x_k-1)\in(\N\cup\{0\})^k\,;
\shortintertext{the \emph{coimage of $\ba$} is the quotient set}
&\coim(\ba)=\big\{\eqcl{x}:=\ba^{-1}(\ba(x))\mid x\in[n]\big\}
\shortintertext{ordered by} 
&\eqcl{x}<\eqcl{y}\iff\ba(x)<\ba(y)\,.
\shortintertext{Let $\fA=(A_1,\dotsc,A_k)$ be an ordered (set)
  partition of $[n]$. The \emph{length-vector} of $\fA$ is}
&\bl(\fA)=(|A_1|,\dotsc,|A_{k-1}|)\,.
\end{align*}
In particular, the restriction of $\rim$ to $\coim(\ba)$ is always
injective and $\bl(\coim(\ba))=\rim(\ba)+(1,\dotsc,1)$.
\end{definition}

\begin{lemma}
Let $\fA$ be an ordered partition of $[n]$ with length-vector
$\bl(\fA)=(\ell_1,\dotsc,\ell_{k-1})$ and let $\ba\colon[n]\to[n]$ be
such that $\coim(\ba)=\fA$.\\ Then $\ba$ is a \emph{parking function}
if and only if
\begin{align*}
&\rim(\ba)\in\sbr{\ell_1,\dotsc,\ell_{k-1}} \shortintertext{and $\ba$
    is a \emph{run $r$ parking function} if and only if}
  &\left\{\begin{array}{l}
  \rim(\ba)=(0,1,\dotsc,r-1,x_{r+1},\dotsc,x_k)\\[2.5pt]
  (0,x_{r+1}-r,\dotsc,x_k-r)\in\sbr{(\sum_{i=1}^r\ell_i)-r,\ell_{r+1},\dotsc,
    \ell_{k-1}}
\end{array}\right.
\end{align*}

\end{lemma}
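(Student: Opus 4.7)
The plan is to unwind the definitions directly. Writing $\rim(\ba) = (y_1, \ldots, y_k)$ with $y_i := x_i - 1$, so that $0 \leq y_1 < \cdots < y_k \leq n-1$, and noting that $\ba$ sends every element of $A_i$ to $x_i$, one has $\ba^{-1}([j]) = A_1 \cup \cdots \cup A_i$ whenever $j$ lies in the interval $[x_i, x_{i+1}-1]$, under the conventions $x_{k+1} := n+1$ and $L_0 := 0$. Consequently $|\ba^{-1}([j])|$ is constant on each such interval and equals $L_i$, so the parking-function condition $|\ba^{-1}([j])| \geq j$ (for every $j \in [n]$) is equivalent to its right-endpoint instance $L_i \geq x_{i+1} - 1$, that is, $y_{i+1} \leq L_i$, for $i = 0, 1, \ldots, k-1$. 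The case $i=0$ forces $y_1 = 0$.

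Comparing this with Definition~\ref{def:sbr} under the identification of the coordinate $x_i$ used there with $y_{i+1}$ here, the list of conditions $y_1 = 0$, $y_i < y_{i+1}$, and $y_{i+1} \leq L_i$ for $i = 1, \ldots, k-1$ is exactly the defining one for $\rim(\ba) \in \sbr{\ell_1, \ldots, \ell_{k-1}}$, which proves the first equivalence. For the run-$r$ part, I unfold $\run(\ba) = r$ as $\{1, \ldots, r\} \subseteq \{x_1, \ldots, x_k\}$ together with $r+1 \notin \{x_1, \ldots, x_k\}$; by strict increase this forces $x_i = i$ (equivalently $y_i = i-1$) for $i \leq r$ and $x_{r+1} \geq r+2$, i.e.\ $y_{r+1} \geq r+1$. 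Setting $z_i := y_i - r$ for $i > r$ and using the identity $L_i - r = (L_r - r) + \ell_{r+1} + \cdots + \ell_i$, the remaining parking-function constraints $y_i < y_{i+1} \leq L_i$ for $i = r, r+1, \ldots, k-1$ become $0 < z_{r+1} \leq L_r - r$ and $z_i < z_{i+1} \leq L_i - r$ for $i = r+1, \ldots, k-1$, which are precisely the defining inequalities for $(0, z_{r+1}, \ldots, z_k) \in \sbr{L_r - r, \ell_{r+1}, \ldots, \ell_{k-1}}$.

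The only delicate point is bookkeeping: aligning the $k+1$ coordinates $(x_0, x_1, \ldots, x_{k-1})$ of the angle-bracket set with the $k$ coordinates of $\rim(\ba)$, and verifying that the requirement $y_{r+1} \geq r+1$ coming from $\run(\ba) = r$ (as opposed to $\run(\ba) > r$) matches the strict inequality $z_{r+1} > z_r = 0$ built into Definition~\ref{def:sbr}. In both equivalences the condition $y_1 = 0$ (respectively the formal $z_r = 0$ after shifting) plays the role of the phantom initial coordinate $x_0$, so the two translations are tight and there is no loss of information in either direction.
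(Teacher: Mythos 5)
Your proof is correct and is essentially the paper's own argument spelled out in full: the paper disposes of this lemma with ``Follows immediately from the definitions,'' and your careful unwinding (reducing the parking condition to its right-endpoint instances $y_{i+1}\leq L_i$, and checking that the strict inequality $0<z_{r+1}$ in $\sbr{\cdot}$ encodes $r+1\notin\ba([n])$) is exactly the verification being left to the reader.
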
\label{sec4.lemma}
\begin{proof} Follows immediately from the definitions.
\end{proof}

\noindent
Note that $\ba=(a_1,\dotsc,a_n)\in\RW_n$ if and only if, for
$i=a_1-1$, $\rim(\ba)$ belongs to the set
\begin{multline*}
\big\langle \underbrace{1,\dotsc,1}_{\hbox{\scriptsize$i$ times}},
n-k+1,\underbrace{1,\dotsc,1}_{\hbox to0pt{\scriptsize\hss$k-i-2$ times\hss}}\big\rangle= \\
= \big\{(0,1,\dotsc,i,x_{i+1},\dotsc,x_{k-1})\in[n-1]^k\mid i<x_{i+1}<\dotsb<x_{k-1}\leq n-1\big\} \,.
\end{multline*}
Hence, if we denote by $\PF_n^r$ the set of run $r$ parking functions
of length $n$, for $\fA=(A_1,\dotsc,A_k)$, according to
\eqref{eq:contas1} and by definition
\begin{align*}
&\big|\PF_n\cap\coim^{-1}(\fA)\big|= \det\limits_{1\leq i,j\leq
    k-1}\bigg( \binom{|A_1|+\dotsb+|A_i|-i+1}{j-i+1}\bigg)\\[5pt]
&\big|\PF_n^r\cap\coim^{-1}(\fA)\big|= \det\limits_{r\leq i,j\leq
    k-1}\bigg( \binom{|A_1|+\dotsb+|A_i|-i}{j-i+1}\bigg)\\[5pt]
&\big|\RW_n\cap\coim^{-1}(\fA)\big|=\binom{n-1-i}{k-1-i}
\end{align*}

\begin{definition}
Given an ordered partition $\fA$ of $[n]$ and $\ba\in[n]^n$, we say
that $\ba$ \emph{is of type $\fA$} if the coimage of $\ba$ is a
\emph{cyclic permutation of $\fA$}.
\end{definition}

\begin{theorem}\label{mthm}
Let  $\fA=(A_1,\dotsc,A_k)$ be a partition of $[n]$ and let
$1\leq r\leq n$ for a natural number $n$.
Then
\begin{itemize}
\item the number of \textbf{parking functions of type $\fA$}, as well as the number of \textbf{rook words of type $\fA$}, is $\displaystyle \binom{n}{k-1}$\,;
\item the number of \textbf{run $r$ parking functions of type $\fA$}, as well as the number of \textbf{run $r$ rook words of type $\fA$}, is $\displaystyle
  r\,\binom{n-r-1}{k-r}$\,.
\end{itemize}
\end{theorem}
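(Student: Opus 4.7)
The plan combines the preceding lemma (which expresses $|\PF_n\cap\coim^{-1}(\fA)|$ and its run-$r$ analog as values of $\nbr{\cdot}$) with Theorem~\ref{th:main}: the theorem forces the parking-function counts to depend only on $n$, $k$, $r$, and an explicit evaluation on one convenient composition pins down the values. The rook-word sides are handled by a direct combinatorial count.

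By the lemma, for an ordered partition of $[n]$ with length-vector $(m_1,\dots,m_{k-1})$, the number of parking functions of that coimage is $\nbr{m_1,\dots,m_{k-1}}$, and the number of run-$r$ parking functions of that coimage is $\nbr{M_r-r,m_{r+1},\dots,m_{k-1}}$ where $M_r=m_1+\cdots+m_r$. Summing over the $k$ cyclic shifts of $\fA=(A_1,\dots,A_k)$, with $\ell_i=|A_i|$ and indices taken modulo $k$, gives the totals $\sum_{m=0}^{k-1}\nbr{\ell_{m+1},\dots,\ell_{m+k-1}}$ and $\sum_{m=0}^{k-1}\bbr{(\sum_{j=1}^r\ell_{m+j})-r,\ell_{m+r+1},\dots,\ell_{m+k-1}}$, which are precisely $s(\bl)$ in Theorem~\ref{th:main} for $(r,t)=(1,0)$ and $(r,t)=(r,-r)$; hence each sum depends only on $n$, $k$, $r$ and not on $\bl$.

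To identify the constants, specialize to $\bl=(1,\dots,1,n-k+1)$. In the $m$-th cyclic shift the long part $n-k+1$ sits at position $k-m$ for $m\ge 1$ and occupies the dropped last slot when $m=0$. Applying the substitution $y_i=x_i-i$ from the lattice-path picture, $\nbr{1^p,n-k+1,1^q}$ becomes the number of weakly increasing sequences $0\le y_{p+1}\le\cdots\le y_{k-1}\le n-k$, namely $\binom{n-1-p}{k-1-p}$; with $p=k-m-1$ this equals $\binom{n-k+m}{m}$, and the hockey-stick identity yields $\sum_{m=0}^{k-1}\binom{n-k+m}{m}=\binom{n}{k-1}$. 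For the run-$r$ case, the first slot $(\sum_{j=1}^{r}\ell_{m+j})-r$ vanishes whenever the long part lies outside the first $r$ coordinates of the shift (making $\sbr{0,\dots}$ empty); only the $r$ shifts $m\in\{k-r,\dots,k-1\}$ survive, and each evaluates to $\nbr{n-k,1^{k-r-1}}=\binom{n-r-1}{k-r}$ by the same substitution, giving the total $r\binom{n-r-1}{k-r}$.

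For the rook-word halves I argue directly. Given a coimage $\fB$ with $1\in B_j$, the rook-word condition $a_1\le\run(\ba)$ forces $v_1=1,\dots,v_j=j$, leaving $v_{j+1}<\cdots<v_k$ free in $\{j+1,\dots,n\}$ and yielding $\binom{n-j}{k-j}$ rook words per coimage; as $j$ cycles through $\{1,\dots,k\}$ across the $k$ shifts of $\fA$, hockey-stick produces $\sum_{j=1}^{k}\binom{n-j}{k-j}=\binom{n}{k-1}$. The run-$r$ variant further imposes $v_{r+1}\ge r+2$, so the per-coimage count is $\binom{n-r-1}{k-r}$ when $j\le r$ and $0$ otherwise, and the $r$ qualifying shifts sum to $r\binom{n-r-1}{k-r}$. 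The main delicate point is the lattice-path bookkeeping for the parking-function evaluation on the chosen $\bl$---tracking the position of the long part under rotation and chaining hockey-stick correctly; the edge cases $k\in\{1,n\}$ and $r=k$, outside the hypotheses of Theorem~\ref{th:main}, are verified by hand.
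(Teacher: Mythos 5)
Your proposal is correct and follows essentially the same route as the paper: apply the preceding lemma to write the type-$\fA$ parking-function counts as the cyclic sums $s(\bl)$ of Theorem~\ref{th:main} with $(r,t)=(1,0)$ and $(r,-r)$, evaluate on the composition with a single long part (your $(1,\dotsc,1,n-k+1)$ is a cyclic shift of the paper's $(n-k+1,1,\dotsc,1)$, so $s$ agrees), and count rook words of each coimage directly. Your explicit hockey-stick bookkeeping and the remark that $r=k$ and the extreme values of $k$ fall outside the stated hypotheses of Theorem~\ref{th:main} are, if anything, slightly more careful than the paper's own write-up.
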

\begin{proof}
Consider the two ordered $k$-compositions of $[n]$,
$\mathcal{C}=(|A_1|,|A_2|,\dotsc,|A_k|)$ and
$\mathcal{D}=(n-k+1,1,\dotsc,1)$, and apply Theorem~\ref{th:main}
with different values of $r$ and $t$.

For the first statement,
take $r=1$ and $t=0$; in the notation thereof,
$$\big|\PF_n\cap\coim^{-1}(\fA)\big|=s(\mathcal{C})\ =\
\sum_{i=0}^{k-1}\binom{n-1-i}{k-1-i}
\ =\ s(\mathcal{D})=\big|\RW_n\cap\coim^{-1}(\fA)\big|\,.$$

For the second statement, by taking $r=1,\dotsc,k$ and $t=-r$
we obtain that
$$\big|\PF_n^r\cap\coim^{-1}(\fA)\big|=
s(\mathcal{D})=r\big|\big\langle n-k, \underbrace{1,\dotsc,1}_{\hbox
  to0pt{\scriptsize\hss$k-r-1$ times\hss}}\big\rangle\big| +0\,,$$
since, for $\ell_1=n-k+1$ and $\ell_2=\dotsb=\ell_k=1$,
$$
\nbr{{\textstyle(\sum_{j=1}^r\ell_{i+j})-r,
    \ell_{i+r+1},\dotsc,\ell_{i+k-1}}}=
\begin{cases}
\nbr{n-k,1,\dotsc,1},& \text{if $i\in[r]$;}\\
0,&\text{otherwise.}
\end{cases}
$$
This shows that the number of run $r$ parking functions
    of type $\fA$ is $r\,\binom{n-r-1}{k-r}$, since
$$\sbr{n-k,1,\dotsc,1}=\big\{(x_1,\dotsc,x_{k-r})\mid 0<x_1<\dotsb<x_{k-r}
\leq n-r-1\big\}\,.$$

Finally, note that, for example, all the type~$\fA$ elements
$\ba=(a_1,a_2,\dotsc,a_n)\in[n]^n$ with $a_1=1$ share the same
coimage, and that there are $\binom{n-r-1}{k-r}$ such rook words with
run $r$, since they are determined by the last $k-r$ strictly
increasing coordinates of $\rim(\ba)$, all of them greater than $r$
and less than $n$. The same happens if $a_1=i$ for $1\leq i\leq r$,
and $a_i$ cannot be greater than $r$, by definition.
\end{proof}

We note that the first part of Theorem~\ref{mthm} can be obtained
directly from \cite[Cyclic Lemma]{LRW}, where the following bijection
is defined. Let $\bb=\ba$ if $\ba\in\PF_n\cap\RW_n$ and, if
$\ba\in\PF_n\setminus\RW_n$ and
$m=\max\big([a_1]\setminus\ba([n])\big)$, let
$\bb=(b_1,\dotsc,b_n)\in[n]^n$ be such that
$$a_i\equiv b_i+m\pmod{n}\,;$$
then $\ba\mapsto\bb$ defines a
bijection between the set of parking functions and the set of rook
words \emph{of a given type}.

\begin{theorem}\label{pf2rw}
For every $n\in\N$,
$$\RP_n(t)=\RR_n(t)\,.$$
\end{theorem}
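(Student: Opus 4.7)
The plan is to derive the identity as a straightforward consequence of Theorem~\ref{mthm}, which has already done the heavy lifting. Every $\ba \in [n]^n$ has a well-defined coimage $\coim(\ba)$, which is an ordered partition of $[n]$; grouping ordered partitions into equivalence classes under cyclic rotation gives the notion of type, so the set $[n]^n$ is partitioned by type. In particular, both $\PF_n$ and $\RW_n$ are partitioned by type, and this partition is respected by the statistic $\run$ in the sense that the count of elements in either set with a given type and a given run value is controlled by Theorem~\ref{mthm}.

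First I would fix a run value $r \in [n]$ (noting that for parking functions one automatically has $\run(\ba) \geq 1$, and for rook words $a_1 \leq \run(\ba)$ forces $\run(\ba) \geq 1$ as well, so the $r=0$ term contributes $0$ to both polynomials). For each ordered partition $\fA = (A_1,\dotsc,A_k)$ of $[n]$ with $k$ parts, Theorem~\ref{mthm} asserts that
\begin{equation*}
\#\{\ba \in \PF_n : \run(\ba) = r,\ \ba \text{ is of type } \fA\} = r\binom{n-r-1}{k-r} = \#\{\ba \in \RW_n : \run(\ba) = r,\ \ba \text{ is of type } \fA\}.
\end{equation*}
Note that this depends only on $n$, $r$, and the number of parts $k$, and vanishes whenever $k < r$ or $k > n-1$, as expected.

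Summing over a system of representatives of types, and using that the type classes partition each of $\PF_n$ and $\RW_n$, I obtain
\begin{equation*}
\#\{\ba \in \PF_n : \run(\ba) = r\} = \#\{\ba \in \RW_n : \run(\ba) = r\}
\end{equation*}
for every $r \in [n]$. Multiplying by $t^r$ and summing over $r$ then gives the claimed polynomial identity $\RP_n(t) = \RR_n(t)$.

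There is no genuine obstacle here: once Theorem~\ref{mthm} is established, the present theorem is a one-line corollary. The only point requiring a moment of care is the observation that the type (cyclic equivalence class of the coimage) is well defined for every word in $[n]^n$ and that Theorem~\ref{mthm} gives matching counts for \emph{each} type separately, which is what allows the term-by-term equality of the two generating functions rather than only their values at $t=1$.
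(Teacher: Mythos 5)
Your argument is correct and coincides with the paper's: the paper's entire proof of Theorem~\ref{pf2rw} is ``Follows immediately from Theorem~\ref{mthm}'', and your write-up simply makes explicit the summation over types and over run values that this entails. The points you flag (the vanishing of the $r=0$ terms and the fact that Theorem~\ref{mthm} matches counts type by type) are exactly the right ones to check.
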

\begin{proof} Follows immediately from Theorem~\ref{mthm}.\end{proof}

\section{Counting rook words with a given run}\label{final}

Given positive integers $n$ and $r$ such that $r\leq n$, let
\begin{align*}
&\RW_n^r = \{ f \in \RW_n \mid \run(f) = r \}
\shortintertext{and for $\ba=(a_1,\dotsc,a_n)\in\RW_n^r$ let}
&\br=\br(\ba)=(i_1,\dotsc,i_r)
\end{align*}
where $i_j=\min\{i\in[n]\mid a_i=j\}$ for every $j\in[r]$
(compare with the definition of $\Run$ in page~\pageref{defrun}).

\begin{theorem}\label{Cfinal}
For every integers $1\leq r\leq n$,
\begin{align}
[t^r]\big(\LT_n(t)\big)&=
r! \sum_{e_1 + \dotsb + e_r = n-r} (n-1)^{e_1} (n-2)^{e_2}
\dotsb (n-r)^{e_r} \label{firid}\\
&= r\ \sum_{j=0}^{r-1} (-1)^j \binom{r-1}{j} (n-1-j)^{n-1}\label{secid}\,.
\end{align}
\end{theorem}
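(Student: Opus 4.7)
The plan is to use Theorems \ref{t2pf}, \ref{pf2pf} and \ref{pf2rw}, which together yield $[t^r]\LT_n(t) = |\RW_n^r|$, and then enumerate $\RW_n^r$ directly.

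For the first identity (\ref{firid}), I would argue as follows. Any $\ba \in \RW_n^r$ satisfies $a_1 \in [r]$ since $a_1 \leq \run(\ba) = r$, so position $1$ is automatically a first-occurrence position of some value in $[r]$. Thus if the entries of $\br(\ba)$ are rearranged in increasing order as $1 = p_1 < p_2 < \cdots < p_r \leq n$, then $p_1 = 1$. To build $\ba$, I first choose the increasing tuple $(p_1, \ldots, p_r)$; then assign the values $a_{p_1}, \ldots, a_{p_r}$ via any bijection onto $[r]$, contributing a factor $r!$; and finally, at each remaining position $q$ with $p_\ell < q < p_{\ell+1}$ (setting $p_0 := 0$, $p_{r+1} := n+1$), I select $a_q$ from $\{a_{p_1}, \ldots, a_{p_\ell}\} \cup \{r+2, \ldots, n\}$. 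This set is forced: elements of $[r] \setminus \{a_{p_1}, \ldots, a_{p_\ell}\}$ are excluded (they would create an earlier first occurrence), $r+1$ is excluded (to keep $\run(\ba) = r$), and everything in $\{r+2, \ldots, n\}$ is unrestricted; its cardinality is $\ell + (n-r-1) = n - (r+1-\ell)$. Writing $g_\ell = p_{\ell+1} - p_\ell - 1$, for fixed $(p_1, \ldots, p_r)$ there are $\prod_{\ell=1}^{r}(n-(r+1-\ell))^{g_\ell}$ valid fillings. Since $(p_1, \ldots, p_r) \mapsto (g_1, \ldots, g_r)$ is a bijection onto compositions of $n-r$ into $r$ nonnegative parts, reindexing $e_j := g_{r+1-j}$ rewrites the product as $(n-1)^{e_1}(n-2)^{e_2} \cdots (n-r)^{e_r}$ and produces (\ref{firid}) after summing and multiplying by $r!$.

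For (\ref{secid}), I recognize the sum in (\ref{firid}) as the complete homogeneous symmetric polynomial $h_{n-r}$ evaluated at $(x_1, \ldots, x_r) = (n-1, n-2, \ldots, n-r)$. I would then apply the classical partial-fractions identity $h_m(x_1, \ldots, x_r) = \sum_{i=1}^r x_i^{m+r-1} / \prod_{j \neq i}(x_i - x_j)$, which follows from decomposing $\prod_i (1-tx_i)^{-1}$ and equating coefficients of $t^m$. With $x_i = n-i$, one has $x_i - x_j = j - i$, so $\prod_{j \in [r], j \neq i}(x_i - x_j) = (-1)^{i-1}(i-1)!(r-i)!$. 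Multiplying the resulting expression for $h_{n-r}$ by $r!$ and using $r!/((i-1)!(r-i)!) = r\binom{r-1}{i-1}$, then reindexing $j = i-1$, yields (\ref{secid}).

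The only delicate point is the bookkeeping in the combinatorial argument: one must carefully separate the ``new'' values of $[r]$ (which would spoil the first-occurrence structure), the forbidden value $r+1$ (which would push the run above $r$), and the free values in $\{r+2, \ldots, n\}$; and one must verify that $p_1 = 1$, so that the zeroth gap is empty and does not enter the product. Once (\ref{firid}) is established, passing to (\ref{secid}) is a routine application of a classical symmetric-function identity.
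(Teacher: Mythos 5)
Your argument is correct. For \eqref{firid} it coincides in substance with the paper's: the paper lets $\mathfrak{S}_r$ act on $\RW_n^r$ by permuting the values in $[r]$ and counts the canonical orbit representatives whose first-occurrence positions $1=i_1<\dotsb<i_r$ carry the values $1,\dotsc,r$ in that order, with the same gap analysis (each position in the $j$th gap admits $n-(r+1-j)$ values, namely the $j$ already-seen small values together with $\{r+2,\dotsc,n\}$); your ``choose the positions, then a bijection onto $[r]$, then fill the gaps'' phrasing is just the unfolded version of that orbit count, and your bookkeeping ($p_1=1$, $\sum g_\ell=n-r$, reindexing $e_j=g_{r+1-j}$) checks out. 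Where you genuinely differ is \eqref{secid}. The paper does \emph{not} derive it from \eqref{firid}: it gives a second, independent evaluation of $|\RW_n^r|$ via the explicit bijection $F(a_1,\dotsc,a_n)=(a_1,\varphi_{a_1}(a_2),\dotsc,\varphi_{a_1}(a_n))$ onto $[r]\times B$, where $B$ is the set of maps $[n-1]\to[n-1]$ whose image contains $[r-1]$, and then counts $B$ by inclusion--exclusion. You instead recognize the sum in \eqref{firid} as $h_{n-r}(n-1,\dotsc,n-r)$ and apply the partial-fraction identity $h_m(x_1,\dotsc,x_r)=\sum_{i} x_i^{m+r-1}/\prod_{j\neq i}(x_i-x_j)$; the evaluations $\prod_{j\neq i}(x_i-x_j)=(-1)^{i-1}(i-1)!\,(r-i)!$ and $r!/((i-1)!\,(r-i)!)=r\binom{r-1}{i-1}$ are right, so this is a valid, purely algebraic deduction of \eqref{secid} from \eqref{firid}. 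The trade-off: the paper's route produces a combinatorial interpretation of the alternating sum (it equals $r\,|B|$) and two independent counts of $\RW_n^r$, while yours is shorter and exhibits the equality of the two closed forms as a transparent instance of a classical symmetric-function identity.
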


\begin{proof}
We have seen before that
$$[t^r]\big(\LT_n(t)\big)=[t^r]\big(\RR_n(t)\big)=\big|\RW_n^r\big| .$$
Given $\ba\in\RW_n^r$ and $\pi \in \mathfrak{S}_r$, let $\pi \ba$ be
the element of $[n]^n$ defined by
$$ (\pi \ba)(j)=
\begin{cases}
\pi (a_j) & \text{if $a_j \leq r$\,;} \\ a_j & \text{if $j > r$\,.}
\end{cases}
$$
Note that $\pi \ba\in\RW_n^r$ if and only if $\ba\in\RW_n^r$. Owing
to this, the left-hand side of \eqref{firid}
is equal to $r!$ times the number of elements of
$$ A = \{ \ba\in\RW_n^r \mid \br(\ba)=(i_1,\dotsc,i_r) \ \text{with}
\ 1=i_1 < i_2 < \dotsb < i_r \}. $$
Now,  $\ba=(a_1,\dotsc,a_n) \in A$ if and only if, for every $1\leq\ell\leq n$,
\begin{itemize}
\item $a_\ell \notin\{i_{j+1},\dotsc,i_r,r+1\}$, if $i_j<\ell < i_{j+1}$ for some
$j\in[r-1]$\,;
\item $a_\ell \neq r+1$, if $\ell > i_r$.
\end{itemize}
This gives \eqref{firid} for $e_j=i_{r+2-j}-i_{r+1-j}-1$ with $1<j\leq r$,  
and $e_1=n-i_r$.

We note that the right-hand side of \eqref{secid} is, by the
Inclusion-Exclusion~Principle, $r$ times the number of elements of
$$B=\big\{f\colon[n-1]\to[n-1]\mid [r-1]\subseteq f([n-1])\big\}\,.$$

Given $\ell\in[n]$ with $\ell\leq r < n$, consider the bijection
$\varphi_{\ell}\colon[n]\setminus\{r+1\}\to[n-1]$ such that
$$\varphi_{\ell}(j)=\begin{cases}
j,&\text{if $j<\ell$\,;}\\
r,&\text{if $j=\ell$\,;}\\
j-1,&\text{if $j>\ell$\,.}
\end{cases}$$
and note that $[r-1]\subseteq\varphi_{\ell}([r])$.  Now,
$F(a_1,\dotsc,a_n)=(a_1,\varphi_{a_1}(a_2),\dotsc,\varphi_{a_1}(a_r))$
clearly defines a bijection from $\RW_n^r$ to $[r]\times B$.
\end{proof}

\printbibliography

\end{document}